\newcommand{\heuteIst}{March 10, 2008 }
\theoremstyle{plain}
\newtheorem{theorem}{Theorem}[section]
\newtheorem{lemma}[theorem]{Lemma}
\newtheorem{corollary}[theorem]{Corollary}
\newtheorem{proposition}[theorem]{Proposition}
\theoremstyle{definition}
\theoremstyle{remark}
\newtheorem{remark}[theorem]{Remark}
\newcommand{\reals}{\mathbb{R}}
\newcommand{\complexs}{\mathbb{C}}
\newcommand{\integers}{\mathbb{Z}}
\newcommand{\Z}{\mathbb{Z}}
\newcommand{\R}{\mathbb{R}}
\DeclareMathOperator{\id}{id}
\newcommand{\boundary}[1]{\partial#1}
\newcommand{\del}{\partial}
\newcommand{\tensor}{\otimes}
\newcommand{\dlra}[1]{\stackrel{#1}{\longrightarrow}}
\newcommand{\iso}{\cong}
\newcommand{\Pos}{\mathcal{R}^+}
\DeclareMathOperator{\Diff}{Diff}
\DeclareMathOperator{\im}{im}      
\DeclareMathOperator{\Spin}{Spin}
\DeclareMathOperator{\coker}{coker}
\DeclareMathOperator{\ind}{ind}
\newcommand{\forget}[1]{}
\global\let\c@equation=\c@theorem}
\begin{document}
\pagestyle{myheadings}
\markboth{Diarmuid Crowley and Thomas Schick}{The Gromoll filtration and the $\alpha$-invariant}

\date{Last compiled \today; last edited  \heuteIst or later}

\date{\today}

\title{The Gromoll
  filtration, KO-characteristic classes and metrics of
  positive scalar curvature}

\author{Diarmuid Crowley\thanks{
\protect\href{mailto:diarmuidc23@gmail.com}{e-mail:diarmuidc23@gmail.com}
\protect\\
\protect\href{http://www.dcrowley.net}{www:~http://dcrowley.net}}
\\ Max Planck Institute for Mathematics\\ Vivatsgasse 7,
53111 Bonn, Germany \and Thomas Schick\thanks{
\protect\href{mailto:schick@uni-math.gwdg.de}{e-mail:
  schick@uni-math.gwdg.de}
\protect\\
\protect\href{http://www.uni-math.gwdg.de/schick}{www:~http://www.uni-math.gwdg.de/schick}
\protect\\
Partially funded by the Courant Research Center `\! \!``Higher order structures in Mathematics"'
within the German initiative of excellence
}\\
Mathematisches Institut, Georg-August-Universit\"at G{\"o}ttingen\\
Bunsenstr.~3, 37073 G\"ottingen, Germany}
\maketitle

\begin{abstract}
Let $X$ be a closed $m$-dimensional spin manifold which admits a metric 
of positive scalar curvature and let $\Pos(X)$ 
be the space of all
such metrics.  For any $g \in \Pos(X)$, Hitchin used the KO-valued
$\alpha$-invariant to define a homomorphism $ A_{n-1} \colon
\pi_{n-1}(\Pos(X), g) \to KO_{m+n}$. 
%
He then showed that $A_0 \neq 0$ if $m = 8k$ or $8k+1$ and that $A_1 \neq 0$ if $m = 8k-1$ or  $8k$.

In this paper we use Hitchin's methods and extend these results by proving that
\[ A_{8j+1-m} \neq 0 \]
whenever $m \geq 7$ and $8j - m \geq 0$.
The new input are elements with
non-trivial $\alpha$-invariant deep down
in the Gromoll filtration 
of the group $\Gamma^{n+1} = \pi_0(\Diff(D^n, \del))$.
We show that $\alpha(\Gamma^{8j+2}_{8j-5})\ne \{0\}$ for $j\ge 1$.  
This information about elements existing deep in the Gromoll filtration is the
second main new result of this note.

\end{abstract}

\section{Introduction}
Let $n$ be greater than $4$, let $\Theta_{n+1}$ denote the group of homotopy
$(n+1)$-spheres and let $\Gamma^{n+1} = \pi_0({\Diff}(D^n, \del))$ denote the
group of isotopy classes of orientation preserving diffeomorphisms of the
$n$-disc which are the identity near the boundary.  There is the standard
isomorphism
$\Sigma \colon \Gamma^{n+1} \cong \Theta_{n+1}$, due to Smale and Cerf
\cites{Smale,Ce}.  Moreover, for all $0 < i \leq j$ there are homomorphisms
\begin{equation*}
\lambda_{i, j}^n \colon \pi_j({\Diff}(D^{n-j}, \del)) \to
\pi_{j-i}({\Diff}(D^{n-j+i}, \del)) .
\end{equation*}
%
The definitions of $\Sigma$ and $\lambda^n_{i, j}$ are recalled in Section \ref{subsec:smoothingtheory}.

We denote $\lambda : = \lambda^n_{i, i}$.  In \cite{G}*{Abschnitt 1} Gromoll defined the group
%
\[ \Gamma^{n+1}_{i+1} : = \lambda(\pi_i(\Diff(D^{n-i}, \del)) \subset \Gamma^{n+1} \]
%
and the corresponding filtration 
\[ 0 = \Gamma^{n+1}_{n-2} \subset \Gamma^{n+1}_{n-3} \subset \dots \subset \Gamma^{n+1}_3 \subset \Gamma^{n+1}_2 = \Gamma^{n+1} \, . \]
%
We say that $f \in \Gamma^{n+1}$ has Gromoll filtration $i$ if 
$f \in \Gamma^{n+1}_i \setminus \Gamma^{n+1}_{i+1}$. 
The identity $\Gamma^{n+1} = \Gamma^{n+1}_2$ is due to Cerf \cite{Ce}, 
as pointed out in \cite{A-B-K}.  The equality $\Gamma^{n+1}_{n-2} =0$ follows 
from Hatcher's proof \cite{Hatcher} of the Smale Conjecture. 

Starting with Novikov \cite{N}, authors have used the homomorphisms
$\lambda^n_{i, j}$ to explore the homotopy type of $\Diff(D^n, \del)$.  For example, 
\cite{B-L}*{Theorem 7.4} shows that there is an infinite
sequence $\{ (p_i,k_i, m_i) \}$ of integer triples with $p_i$ odd primes, $\lim_{i \to \infty} m_i/k_i = 0$ and
\[ \pi_{k_i} (\Diff(D^{m_i}, \del)) \tensor \Z/p_i \neq 0 \, .\]
Later, Hitchin
\cite{H}*{Section 4.4} used the 
homomorphisms $\lambda^n_{i, j}$ to investigate the homotopy type of the space
of positive scalar curvature metrics on a closed manifold.  In this paper we
extend the results of \cite{B-L} and \cite{H}*{Section 4.4}.


Hitchin's main tool is the $\alpha$-invariant, the KO-valued index of the real
Dirac operator of a closed spin manifold. 
 Since an exotic sphere carries a unique spin structure, we get an induced
homomorphism
\[ \alpha\colon  \Gamma^{m+1}\xrightarrow{\iso}\Theta_{m+1}\to KO_{m+1} \, . \]  
Our first main result shows that the Gromoll filtration of some $(8k+2)$-dimensional exotic spheres with
non-trivial $\alpha$-invariant is quite deep. 

\begin{theorem} \label{thm:1}
For all $j \geq 1$ there is an element $f_j \in \pi_{8j-6}(\Diff(D^7, \del))$
such that $\alpha(\lambda(f_j)) \neq 0$ and $2 f_j = 0$.  Hence
$\alpha(\Gamma^{8j+2}_{8j-5}) \neq \{ 0 \}$ and for all $0 \leq i \leq 8j-6$,
$\lambda^{8j+1}_{i, 8j-6}(f_j) \in \pi_{8j-6-i}(\Diff(D^{7+i}, \del))$ is a
non-trivial element of order $2$.  
\end{theorem}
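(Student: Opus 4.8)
The plan is to produce the elements $f_j$ by starting with generators of the image of the $J$-homomorphism (or closely related elements) in the stable homotopy groups of spheres and tracking them backwards through the fibration sequences that define the maps $\lambda^n_{i,j}$. Concretely, the $\alpha$-invariant detects the class of an exotic sphere in $\mathrm{bP}_{8j+2}/\text{(smaller)} \cong \Z/2$; equivalently, under the Adams $e$-invariant or via the Kervaire-Milnor exact sequence, such an exotic $(8j+2)$-sphere corresponds to a nonzero element in the cokernel-of-$J$ part of $\pi_{8j+1}^s$ or to the Kervaire class, but the relevant point is that there is a \emph{specific} element — I expect it to be related to $\eta \cdot \mu_{8j-6}$ or to $\eta^2$ times an image-of-$J$ generator — living in $\pi_{8j+1}^s$ which (i) has $\alpha \ne 0$ and (ii) has order $2$. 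The order-two statement $2f_j = 0$ will come for free because these elements are $\eta$-multiples (everything in the image of multiplication by $\eta \in \pi_1^s$ is $2$-torsion).

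First I would recall, from Section \ref{subsec:smoothingtheory}, the precise fibration sequences underlying $\lambda^n_{i,j}$: the map $\Diff(D^{n-j},\del) \to \Omega^{?}\Diff(D^{n-j+i},\del)$ fits into towers coming from restriction fibrations $\Diff(D^k,\del) \to \Diff(D^{k+1},\del) \to (\text{space of embeddings/collars})$, and the key input is that these intermediate spaces are highly connected in the relevant range, so that $\lambda^{8j+1}_{i,8j-6}$ is injective (or at least injective on the relevant $2$-torsion) for $0 \le i \le 8j-6$. This is where I would invoke the connectivity estimates (Morlet, Burghelea--Lashof, or the version in \cite{B-L}) that make the Gromoll filtration maps behave well through a whole band of dimensions. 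The upshot: it suffices to produce $f_j \in \pi_{8j-6}(\Diff(D^7,\del))$ with $2f_j=0$ and $\alpha(\lambda(f_j)) = \alpha(\Sigma(\lambda(f_j))) \ne 0$, and then the statements about $\lambda^{8j+1}_{i,8j-6}(f_j)$ and about $\Gamma^{8j+2}_{8j-5}$ follow formally.

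Second, to build $f_j$ itself I would use Hitchin's identification (recalled in the excerpt as \cite{H}*{Section 4.4}) of the composite $\pi_i(\Diff(D^{n-i},\del)) \to \Gamma^{n+1} \cong \Theta_{n+1} \to \pi^s_{n+1}$-type invariants with maps that factor through stable homotopy, and then exhibit the desired stable class. The cleanest route is: take the generator $\mu \in \pi_{8j-7}^s$ (an $\alpha$-nontrivial element of order $2$ detected by $KO$ in dimension $\equiv 1 \bmod 8$), note that $\eta \mu \in \pi_{8j-6}^s$ is also $KO$-nontrivial (in dimension $\equiv 2 \bmod 8$) and $2$-torsion, and then desuspend it into $\pi_{8j-6}(\Diff(D^7,\del))$ using that $8j - 6 = (8j-6)$ and the target $\Diff(D^7,\del)$ has $7$ large enough (the hypothesis $m \ge 7$ in the abstract corresponds exactly to the base disc $D^7$) for an EHP-type or Morlet-comparison desuspension to land, and crucially that the $\alpha$-invariant of the resulting disc bundle/exotic sphere is preserved under this desuspension. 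The order-two claim for $f_j$ is then automatic.

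The main obstacle is the desuspension step: showing that an honest family of diffeomorphisms of $D^7$ exists realizing the stable class, i.e.\ that the relevant element of $\pi_{8j-6}^s$ (or of $\pi_{8j-6}(\mathrm{Top}/\mathrm{O})$, etc.) actually lifts through the finite stage $\Diff(D^7,\del)$ rather than only through a stabilized version, \emph{while keeping} $\alpha \ne 0$. I expect this to require (a) a careful count of the connectivity of the fibers of $\Diff(D^k,\del) \to \Diff(D^{k+1},\del)$ for $7 \le k \le 8j-1$ to see that nothing obstructs the lift in the needed range, combined with (b) the explicit computation that $\alpha$ of the clutched exotic $(8j+2)$-sphere equals the $KO$-Hurewicz image of $\eta\mu_{8j-7}$, which is where the hypothesis relating $8j$ to $m=7$ and the appearance of the dimension $8j+2$ (so that $KO_{8j+2} \cong \Z/2$ and the relevant homomorphism is onto) really gets used. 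Once that computation is in hand, everything else in the theorem statement is a formal consequence of injectivity of the Gromoll maps in range and of $\eta$-linearity.
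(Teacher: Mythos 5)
Your reduction is fine as far as it goes: it does suffice to produce $f_j \in \pi_{8j-6}(\Diff(D^7,\del))$ with $2f_j=0$ and $\alpha(\lambda(f_j))\neq 0$, and the non-triviality of every intermediate image $\lambda^{8j+1}_{i,8j-6}(f_j)$ then follows formally --- but not for the reason you give. It follows simply because each such element maps onto $\lambda(f_j)$, which has $\alpha\neq 0$; no injectivity of the $\lambda$-maps is needed, and the connectivity estimates you invoke are in any case unavailable: the stabilisation $PL_7/O_7 \to PL/O$ (equivalently the comparison $\Diff(D^7,\del)\to$ stabilised versions) is highly connected only in a range of order the dimension $7$, nowhere near the degrees $8j+2$ you need for large $j$.

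The genuine gap is exactly the step you flag as ``the main obstacle'': the desuspension. There is no EHP- or connectivity-based mechanism that places the stable class $\eta\mu_{8j-7}\in\pi^S_{8j-6}$ into $\pi_{8j-6}(\Diff(D^7,\del))\cong\pi_{8j+2}(PL_7/O_7)$, and your proposal supplies no substitute. The paper's construction is different in two essential ways. First, the unstable input is a statement about spheres, not about diffeomorphism groups: one shows (Lemma \ref{lem:Curtis}, via a careful Toda-bracket reworking of Adams' construction, or alternatively Curtis' sphere-of-origin results) that the Adams element $\mu_{8j-7}$ lives already on $S^5$, hence on $S^9$, with a representative $\mu_{8j-7,9}\in\pi_{8j+2}(S^9)$ of order $2$. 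Second, one does not desuspend a product class at all: by Cerf's theorem $\Gamma^9=\Gamma^9_2$ there is $g\in\pi_1(\Diff(D^7,\del))$ with $\Sigma(\lambda(g))=\Sigma_{\mu_9}$, and via the Morlet equivalence $M_7\colon\Diff(D^7,\del)\simeq\Omega^8(PL_7/O_7)$ one sets $f_j:=M_{7*}^{-1}\bigl(M_{7*}(g)\circ\mu_{8j-7,9}\bigr)$, i.e.\ one precomposes the class $M_{7*}(g)\in\pi_9(PL_7/O_7)$ with the unstable order-$2$ class on $S^9$; this gives $2f_j=0$ at once. The computation $\alpha(\Sigma(\lambda(f_j)))=\alpha(\mu_9\cdot\mu_{8j-7})=\alpha(\eta^2)\beta^j\neq 0$ then requires identifying the unstable composition product with the product in framed bordism and the $d_{\R}$-invariant, which rests on the compatibility $M_*=\Psi\circ\Sigma$ (Lemma \ref{lem:M=SigmaPsi}) and the large commutative diagram \eqref{eq:big_diagram}; note also that the element realised is the product of two $\mu$-classes, not $\eta$ times an image-of-$J$ generator, and that $\alpha$ detects a $\Z/2$ in $\coker(J)$ rather than anything in $bP_{*}$. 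Without the unstable existence of $\mu_{8j-7,9}$ and the composition trick, your outline cannot be completed.
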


\subsection{Positive scalar curvature}
Let $X$ be a closed spin manifold of dimension $m$ and let $\Pos(X)$ denote the space of
positive scalar curvature metrics on $X$.  The Lichnerowicz formula entails that the first obstruction to the existence of a positive scalar curvature metric on $X$ is the index of the Dirac operator defined by its spin structure.  This is an element $\ind(X) \in KO_m$ which gives rise to a ring homomorphism
\[ \alpha \colon \Omega_*^{\rm spin} \to KO_*, \quad [X] \mapsto \ind(X) \, .\]
When $X$ is simply connected of dimesion $\ge 5$, Stolz \cite{Stolz} proved
that $\Pos(X) \neq 
\phi$ if and only if $\alpha(X) = 0$.  In general, the question of whether
$\Pos(X) \neq \phi$ is a deep problem which remains open, see for example
\cites{Rosenberg, Sch}. 


If $\Pos(X) \ne \emptyset$ we equip it with the $C^\infty$-topology and go on to
investigate this topological space.  Note that $\Diff(X)$ acts on
$\Pos(X)$ via pull-back of metrics and so fixing $g$
defines a map $T \colon \Diff(X) \to \Pos(X),\; h \mapsto h^* g$.  Moreover, fixing 
$D^m \subset X$ defines an inclusion $i \colon \Diff(D^m, \del) \to \Diff(X)$
via extension by the identity.

Hitchin observed in his thesis \cite{H}*{Theorem 4.7} that sometimes non-zero
elements in $\pi_*(\Diff(D^m, \del))$ yield, via the induced action of $\Diff(D^m, \del)$ on
$\Pos(X)$, non-zero elements in $\pi_*(\Pos(X)) : = \pi_*(\Pos(X), g)$.   More
precisely, Hitchin \cite{H}*{Proposition 4.6} (see Section \ref{subsec:cor1}),
defines a homomorphism
\[ A_{n-1} \colon\pi_{n-1}(\Pos(X)) \to KO_{m+n} \]
and shows that the composition 
\[ C_{n-1} \colon \pi_{n-1}(\Diff(D^m, \del)) \xrightarrow{i_*} \pi_{n-1}(\Diff(X)) \xrightarrow{T_*} \pi_{n-1}(\Pos(X)) \xrightarrow{A_{n-1}} KO_{m+n}    \]
is non-trivial for $n = 1$ and $m = 8k, 8k+1$ and for $n=2$ and $n = 8k-1, 8k$.

Hitchin's method 
exploited the at the time known facts that  $\alpha
(\Gamma^{8j+1}_1)\ne  \{0\}$ and $\alpha(\Gamma^{8j+2}_2)\ne \{0\}$. With our refined knowledge
about the non-zero images $\alpha(\Gamma^{8j+2}_{8j-5})$, we obtain the following corollary using the
same method as Hitchin.

\begin{corollary} \label{cor:1}
Let $X$ be a spin manifold of dimension $m \geq 7$ with $g \in \Pos(X)$ and
let $f_j$ be as in Theorem \ref{thm:1}.  Then for all $j \in \integers$ such
that $8j+1-m \ge 0$, $C_{8j+1-m}(\lambda^{8j+1}_{m-7 ,8j-6}(f_j)) \neq 0 \in
KO_{8j+2}$.  In particular, the homomorphism 
\begin{equation*}
A_{8j+1-m} \colon \pi_{8j+1-m}(\Pos(X)) \to KO_{8j+2}
\end{equation*}
is a split surjection and for all such $(X, g)$ the graded group
$\pi_*(\Pos(X))$ contains non-trivial two-torsion in infinitely many degrees.
\end{corollary}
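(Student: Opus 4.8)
The plan is to feed the elements constructed in Theorem \ref{thm:1} into Hitchin's machinery, exactly as Hitchin did in \cite{H}*{Section 4.4} with the shallow elements of $\Gamma^{8j+1}_1$ and $\Gamma^{8j+2}_2$. First I would fix $j$ with $8j+1-m\ge 0$ and set $n-1 := 8j+1-m$, so that $m+n = 8j+2$ and the target $KO_{m+n} = KO_{8j+2}\cong\Z/2$. Starting from $f_j\in\pi_{8j-6}(\Diff(D^7,\del))$, apply the Gromoll-type homomorphism $\lambda^{8j+1}_{m-7,\,8j-6}$ to land in $\pi_{8j-6-(m-7)}(\Diff(D^{7+(m-7)},\del)) = \pi_{8j+1-m}(\Diff(D^m,\del)) = \pi_{n-1}(\Diff(D^m,\del))$, which is the source of the composition $C_{n-1}$. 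The claim of the corollary is then precisely that $C_{n-1}$ is non-zero on this element, together with the formal consequences (split surjectivity and infinitely many degrees with two-torsion).

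The key computation is to identify $C_{n-1}(\lambda^{8j+1}_{m-7,8j-6}(f_j))$ with the $\alpha$-invariant of the exotic sphere $\Sigma(\lambda(f_j))\in\Theta_{8j+2}$, up to the relevant indexing shift. Here I would recall from Section \ref{subsec:cor1} how Hitchin's homomorphism $A_{n-1}$ is built: a class in $\pi_{n-1}(\Pos(X))$ is represented by a fibrewise psc-metric on $X\times S^{n-1}\to S^{n-1}$, which one caps off and bundles over $D^n$ to obtain a psc-metric on the total space of an $X$-bundle over $S^n$; the $\alpha$-invariant of that total space (using the fibrewise Dirac operator, i.e.\ the KO-valued family index) is $A_{n-1}$ of the class. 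Chasing an element coming from $\Diff(D^m,\del)$ through $i_*$ and $T_*$, the resulting bundle is obtained by gluing in a clutching diffeomorphism supported in a disc $D^m\subset X$; its total space is $X\times S^n$ modified by a connected sum with the exotic sphere determined by the clutching data. Since $\alpha$ is a ring homomorphism on $\Omega^{\rm spin}_*$ and $\alpha(X\times S^n)=\alpha(X)\cdot\alpha(S^n)=0$ (the standard sphere bounds), one is left with $\alpha$ of the exotic sphere, and the compatibility of the maps $\lambda^n_{i,j}$ with $\Sigma$ (recalled in Section \ref{subsec:smoothingtheory}) shows this exotic sphere is exactly $\Sigma(\lambda(f_j))$ independent of how far down the Gromoll filtration we started. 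By Theorem \ref{thm:1}, $\alpha(\lambda(f_j))\ne 0$ in $KO_{8j+2}$, so $C_{n-1}$ of our element is non-zero.

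Given non-triviality, the remaining assertions are formal. Since $KO_{8j+2}\cong\Z/2$ and $C_{n-1}$ is a homomorphism hitting the non-zero element, $A_{n-1}\colon\pi_{8j+1-m}(\Pos(X))\to KO_{8j+2}$ is surjective; because $\Z/2$ is a direct summand of any abelian group onto which it surjects (choose a preimage of order $2$, using $2f_j=0$ from Theorem \ref{thm:1}, which forces the constructed class to have order dividing $2$, hence exactly $2$), the surjection splits. Letting $j$ range over all integers with $8j+1-m\ge 0$ gives a non-trivial class of order $2$ in $\pi_{8j+1-m}(\Pos(X))$ for infinitely many $j$, and the degrees $8j+1-m$ are pairwise distinct, yielding two-torsion in infinitely many degrees of the graded group $\pi_*(\Pos(X))$.

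The main obstacle, as in Hitchin's original argument, is the geometric bookkeeping in the second paragraph: one must verify that pushing a deep-Gromoll-filtration element through $i_*$, $T_*$ and then $A_{n-1}$ genuinely produces the connected sum with $\Sigma(\lambda(f_j))$ and not some other bundle, and that the fibrewise-index computation of $\alpha$ on the total space really reduces to $\alpha$ of that exotic sphere. Once the relevant additivity of $\alpha$ under this gluing and the compatibility of $\lambda^n_{i,j}$ with the suspension $\Sigma$ are in place — both of which are exactly the tools Hitchin assembled and which are recalled in Sections \ref{subsec:smoothingtheory} and \ref{subsec:cor1} — the corollary follows by substituting the stronger input of Theorem \ref{thm:1} for Hitchin's $\alpha(\Gamma^{8j+1}_1)\ne\{0\}$ and $\alpha(\Gamma^{8j+2}_2)\ne\{0\}$.
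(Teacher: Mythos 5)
Your proposal is correct and follows essentially the same route as the paper: plug $\phi=\lambda^{8j+1}_{m-7,8j-6}(f_j)$ into Hitchin's composition $A_{n-1}\circ T_*\circ i_*$, identify its value via Hitchin's Propositions 4.4 and 4.6 with $\alpha(\Sigma(\lambda(f_j)))\neq 0$, and deduce the splitting and the two-torsion in infinitely many degrees from $2\phi=0$ and $KO_{8j+2}\cong\Z/2$. The only difference is that you sketch the geometric content behind Hitchin's identification (clutching, connected sum, multiplicativity of $\alpha$), which the paper simply cites.
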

%
%

To our knowledge, these examples and those of \cite{HSS} are the first examples where
$\pi_k(\Pos(X))$ is shown to be non-trivial when $k > 1$.  In contrast to \cite{HSS}, Corollary 
\ref{cor:1} also shows that $\pi_*(\Pos(X))$ is non-trivial in infinitely many degrees.
However, note that by construction the elements of $\pi_*(\Pos(X))$ found in Corollary
\ref{cor:1}
vanish under the action of $\Diff(X)$, i.e.~in $\pi_*(\Pos(X)/\Diff(X))$. In
contrast to this in
\cite{HSS} the first examples of
elements
$x \in \pi_k(\Pos(X))$ which remain non-trivial by pullback with arbitrary
families in $\Diff(X)$
are constructed for arbitrarily large $k$. That $\Pos(X)/\!\Diff(X) $ often has
infinitely many 
components is already proved in \cites{Botvinnik-Gilkey,LM,PSpsc}.

\section{The Gromoll filtration of Hitchin spheres} \label{sec:GF+HS}

In this Section we prove Theorem \ref{thm:1} and Corollary 
\ref{cor:1}. Section \ref{subsec:smoothingtheory} recalls methods from
smoothing theory which give a second definition of the Gromoll filtration.
Section \ref{subsec:KM} reviews the Kervaire-Milnor analysis of the group of
homotopy spheres.  Section \ref{subsec:alpha} recalls results of Adams from
stable homotopy theory and their relation to the $KO$-index theory due to
Milnor.  Section \ref{subsec:thm1} shows how non-trivial compositions in the
stable homotopy groups of spheres lead to non-zero elements deeper in the
Gromoll filtration and so proves Theorem \ref{thm:1}.  


\subsection{The groups  $\Theta_{n+1}$, $\Gamma^{n+1}$ and $\pi_{n+1}(PL/O)$} \label{subsec:smoothingtheory}
Let $n \geq 5$.  Recall that $\Theta_{n+1}$ is the group of oriented diffeomorphism classes of homotopy $(n+1)$-spheres, that by definition $\Gamma^{n+1} = \pi_0(\Diff(D^n, \del))$ and recall also the space $PL/O$ which will be defined below.  In this subsection we review the three fundamental isomorphisms $\Sigma$, $\Psi$ and $M_*$ appearing the following diagram:
\[ \xymatrix{ \Gamma^{n+1} \ar[rr]^{\Sigma} \ar[dr]^{M_*} & & \Theta_{n+1} \ar[dl]_{\Psi} \\ & \pi_{n+1}(PL/O) } \]
%
%
We then prove that the diagram commutes: a point which seems to have been implicit in the literature.

%
%

Given a mapping class $f \in \Gamma^{n+1}$ we may build a homotopy $(n+1)$-sphere $\Sigma_f$ by first extending $f$ by the identity map to a diffeomorphism $\bar f \colon S^n \to S^n$ and then setting 
$ \Sigma_f : = D^{n+1} \cup_{\bar f} D^{n+1}.$
In this way we obtain the map, which is well known to be a homomorphism
\begin{equation} \label{eq:Sigma}
\Sigma \colon \Gamma^{n+1} \to \Theta_{n+1}, \quad f \mapsto \Sigma_f \, .
\end{equation}
By \cite{Smale} $\Sigma$ is onto and by \cite{Ce} $\Sigma$ is injective.  


Next let $O_k$ and $PL_k$ denote the $k$-dimensional orthogonal group and the
group of piecewise linear homeomorphisms of $k$-dimensional Euclidean space
fixing the origin
and let $O := \lim_{k \to \infty}O_k$ and $PL := \lim_{k \to \infty}PL_k$
denote the corresponding stable groups.  There are inclusions $O_k \to PL_k$
with quotients $PL_k/O_k$ and we obtain the space $PL/O = \lim_{k \to
  \infty}(PL_k/O_k)$ along with stablisiation maps $S \colon PL_k/O_k \to
PL/O$.  The fundamental theorem of smoothing theory applied to the
$(n+1)$-sphere \cites{HM,LR}, (see also \cite{La}*{Theorem 7.3}) states that
there is an isomorphism 
\begin{equation} \label{eq:Gamma}
\Psi_{n+1} \colon \Theta_{n+1} \cong \pi_{n+1}(PL/O) \, .
 %
 \end{equation}
%

A third fundamental result is due to Morlet (unpublished) and 
Burghelea and Lashof \cite{B-L}*{Theorems 4.4, 4.6}.  

\begin{theorem}[\cite{B-L} Theorem 4.4] \label{thm:Morlet}
There is a homotopy equivalence of commutative $H$-spaces
\[ M_n \colon \Diff(D^n, \del) \simeq \Omega^{n+1}(PL_n/O_n) \]
such that the composition
\[ \pi_0\Diff(D^n, \del) \dlra{M_{n*} } \pi_0\Omega^{n+1}(PL_n/O_n) 
\dlra{S_*} \pi_0 \Omega^{n+1}(PL/O) = \pi_{n+1}(PL/O)  \]
%
%
yields an isomorphism
\[ M_* \colon \Gamma^{n+1} \cong \pi_{n+1}(PL/O) \, . \]
Here $S_*$ is induced by the
stabilisation map  $\Omega^{n+1}(PL_n/O_n) \to \Omega^{n+1}(PL/O)$.
%
%
\end{theorem}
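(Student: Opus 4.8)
The plan is to prove this by smoothing theory, in the manner of Morlet and of Burghelea--Lashof; I will indicate where the substantive work lies and treat the deepest inputs as black boxes. Fix the standard smooth structure $\sigma_0$ on $D^n$ together with the compatible standard PL structure, and let $\mathcal{S}_\partial(D^n)$ denote the space of smooth structures on $D^n$ that refine the standard PL structure and agree with $\sigma_0$ near $\partial D^n$, with its usual topology. The group $PL(D^n,\partial)$ of PL homeomorphisms of $D^n$ that are the identity near $\partial D^n$ acts on $\mathcal{S}_\partial(D^n)$ by pushforward, $h\cdot\sigma := h_*\sigma$; because $h$ is the identity near the boundary this respects the boundary condition, and the stabiliser of $\sigma_0$ consists of those PL homeomorphisms that are simultaneously diffeomorphisms of $(D^n,\sigma_0)$ and the identity near $\partial$.

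I would first record two inputs. First, the Alexander trick: the ``shrink to a point'' isotopy $h_t(x) := t\,h(x/t)$ for $|x|\le t$ and $h_t(x):=x$ for $|x|\ge t$ is PL and depends continuously on $t\in[0,1]$, including at $t=0$ where it is the identity; hence $PL(D^n,\partial)\simeq\ast$. Second, the smoothing theory classification (Hirsch--Mazur, Kirby--Siebenmann): for a PL $n$-manifold $M$ the space of smoothings of $M$ rel a fixed smoothing near $\partial M$ is homotopy equivalent to the space of sections, rel $\partial$, of a bundle over $M$ with fibre $PL_n/O_n$ associated to the tangent PL microbundle. Applied to $M=D^n$, where this bundle is trivial, it gives
\[
\mathcal{S}_\partial(D^n)\ \simeq\ \Map\bigl((D^n/\partial D^n,\ast),(PL_n/O_n,\ast)\bigr)\ =\ \Omega^n(PL_n/O_n).
\]

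Next I would assemble the fibration sequence relating $\Diff(D^n,\partial)$ to $\mathcal{S}_\partial(D^n)$. The orbit map $PL(D^n,\partial)\to\mathcal{S}_\partial(D^n)$, $h\mapsto h_*\sigma_0$, is a quasifibration onto the path component $\mathcal{S}^0_\partial(D^n)$ of $\sigma_0$: its image is all of that component because concordant smoothings of $D^n$ are PL-isotopic (\emph{concordance implies isotopy}, valid in the dimensions relevant here), and its fibre over $\sigma_0$ is the stabiliser above, which is homotopy equivalent to $\Diff(D^n,\partial)$ because the inclusion of PL-diffeomorphisms into all diffeomorphisms, rel $\partial$, is a homotopy equivalence (secant approximation over fine smooth triangulations, i.e.\ classical Cairns--Whitehead theory). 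Combining this with the contractibility of $PL(D^n,\partial)$ and the fact that looping only sees the based component, we obtain
\[
\Diff(D^n,\partial)\ \simeq\ \Omega\,\mathcal{S}^0_\partial(D^n)\ =\ \Omega\,\mathcal{S}_\partial(D^n)\ \simeq\ \Omega^{\,n+1}(PL_n/O_n),
\]
and this equivalence is $M_n$. To upgrade it to a map of commutative $H$-spaces, endow $\Diff(D^n,\partial)$ with the homotopy-commutative ``disjoint supports'' product (homotopic to composition) and observe that every step above is natural for this structure, so that $M_n$ is a loop map. Finally, applying $\pi_0$ shows that $M_{n*}\colon\Gamma^{n+1}\to\pi_{n+1}(PL_n/O_n)$ is an isomorphism; since $PL_n/O_n\to PL/O$ induces an isomorphism on $\pi_i$ for $i\le n+1$ (Morlet, Kirby--Siebenmann), $S_*$ is an isomorphism, and the composite $M_*\colon\Gamma^{n+1}\to\pi_{n+1}(PL/O)$ is the asserted isomorphism.

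The hard part is constructing this quasifibration and pinning down its fibre and image: this is exactly the point-set and smoothing-theoretic bookkeeping of the Morlet and Burghelea--Lashof papers --- one must choose workable models for the PL automorphism space and for the space of smoothings, verify that the orbit map admits enough local sections to be a quasifibration, and invoke \emph{concordance implies isotopy} together with the homotopy equivalence between PL-diffeomorphisms and diffeomorphisms. I would cite these rather than reprove them; granting the smoothing theory classification, the bundle reduction and the Alexander trick that remain are comparatively formal.
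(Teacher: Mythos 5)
The paper does not actually prove this statement---it is quoted from Burghelea--Lashof (Theorems 4.4 and 4.6, following Morlet)---and your sketch reproduces exactly the standard smoothing-theoretic argument behind that citation: the Alexander trick making $PL(D^n,\partial)$ contractible, the identification of the space of smoothings rel boundary with $\Omega^n(PL_n/O_n)$, the resulting delooping $\Diff(D^n,\partial)\simeq\Omega^{n+1}(PL_n/O_n)$, and stability of $PL_n/O_n\to PL/O$ through dimension $n+1$. So your proposal is correct in outline and takes essentially the same route as the proof the paper relies on, with the genuinely hard inputs (the quasifibration property of the orbit map, concordance implies isotopy, and the comparison of PL-compatible diffeomorphisms with all diffeomorphisms) correctly identified and delegated to the literature rather than reproved.
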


To give the alternative description of the Gromoll filtration, we use the
homomorphisms
 
\begin{equation*}
\lambda_{i, j}^n \colon \pi_j({\Diff}(D^{n-j}, \del)) \to
\pi_{j-i}({\Diff}(D^{n-j+i}, \del))
\end{equation*}
from the introduction.  Here we represent $a\in \pi_j(\Diff(D^{n-j},\boundary))$ by a map
\begin{equation*}
a \colon [0,1]^j\to \Diff([0,1]^{n-j},[0,1]^{n-j})
\end{equation*}
such that the value of $a$ is the
identity map near the boundary of $[0,1]^j$ and such that each $a(x)$ is a
diffeomorphism which restricts to the identity near the boundary of
$[0,1]^{n-j}$. The class
$\lambda_{i,j}^n(a)$ is then represented by the map 
\begin{equation} \label{eq:lambda}
\lambda^n_{i,j}(a)\colon [0,1]^{j-i}\to \Diff([0,1]^{n-j}\times [0,1]^i,[0,1]^{n-j}\times[0,1]^i)
\end{equation}
with $\lambda^n_{i, j}(a)(x)(t,y)=(a(x,y)(t),y)$.  Indeed the formula \eqref{eq:lambda}
implies that if we use
$\Omega$ to denote the space of differentiable loops, then there are maps
\[ \Lambda^n_{i, j} \colon \Omega^j\Diff(D^{n-j}, \del) \to \Omega^{j-1}\Diff(D^{n-j+i}, \del) \]
which induce the homomorphisms $\lambda^n_{i, j}$.  

\begin{lemma}[c.f.~\cite{Burghelea_announce}*{Theorem 1.3}]
 \label{lem:Morlet-and-stabilisation}
Let $i_n \colon PL_n/O_n \to PL_{n+1}/O_{n+1}$ be the canonical inclusion and let
$\Omega M_n$ be the map of smooth loop spaces induced by $M_n$
and assume $n \geq 4$.  Then  the following diagram is homotopy commutative.
\[ \xymatrix{ \Omega \Diff(D^n, \del) \ar[d]^{\lambda^n_{1, 1}} \ar[r]^(0.45){\Omega M_n} & \Omega^{n+2}(PL_n/O_n) \ar[d]^{\Omega^{n+2}(i_{n}) } \\
\Diff(D^{n+1}, \del) \ar[r]^(0.4){M_{n+1}} & \Omega^{n+2}(PL_{n+1}/O_{n+1}) } \]
\end{lemma}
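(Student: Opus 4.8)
The plan is to recall the construction of the Morlet equivalence $M_n\colon\Diff(D^n,\del)\simeq\Omega^{n+1}(PL_n/O_n)$ underlying Theorem \ref{thm:Morlet} and to check that it is natural with respect to the operation of adjoining one Euclidean coordinate. On the classifying spaces this operation is the stabilisation $i_n\colon PL_n/O_n\to PL_{n+1}/O_{n+1}$; on diffeomorphism groups it is, after looping once, the Gromoll map $\lambda^n_{1,1}$. The resulting commuting square is exactly the assertion of \cite{Burghelea_announce}*{Theorem 1.3}; we indicate a proof in the range $n\ge4$ where PL smoothing theory applies.

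Recall how $M_k$ arises. Let $PL(D^k,\del)$ be the group of PL self-homeomorphisms of $D^k$ that are the identity near $\del D^k$; by the Alexander trick it is contractible. Let $\mathcal S(D^k)$ be the space of smooth structures on the PL manifold $D^k$ agreeing with the standard one near $\del D^k$. There is a natural map $\Diff(D^k,\del)\to PL(D^k,\del)$, and in the associated (quasi-)fibration $\Diff(D^k,\del)\to PL(D^k,\del)\to\mathcal S(D^k)$ the total space is contractible, so $\Diff(D^k,\del)\simeq\Omega\,\mathcal S(D^k)$. The PL smoothing theorem identifies $\mathcal S(D^k)$ with $\Omega^k(PL_k/O_k)$ by sending a smoothing to its difference class relative to the standard one; composing gives $M_k$, and further stabilisation $PL_k/O_k\to PL/O$ produces the isomorphism $M_*$ of Theorem \ref{thm:Morlet}.

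Now observe that a based loop $\{\theta_u\}_{u\in[0,1]}$ of smoothings of $D^n$ fixed near $\del D^n$ with $\theta_0=\theta_1$ standard is the same as a concordance of the standard smoothing with itself, hence a smooth structure on $D^n\times[0,1]=D^{n+1}$ which is standard near all of $\del D^{n+1}$, i.e.\ an element of $\mathcal S(D^{n+1})$. Letting the loop vary, this gives an operation $c\colon\Omega\bigl(\Omega\,\mathcal S(D^n)\bigr)\to\Omega\,\mathcal S(D^{n+1})$, which through the equivalences $\Diff(D^n,\del)\simeq\Omega\,\mathcal S(D^n)$ and $\Diff(D^{n+1},\del)\simeq\Omega\,\mathcal S(D^{n+1})$ is a map $c\colon\Omega\Diff(D^n,\del)\to\Diff(D^{n+1},\del)$. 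Under the smoothing theorem the difference class of such a concordance in $PL_{n+1}/O_{n+1}$ is the post-composition with $i_n$ of its difference class in $PL_n/O_n$, because near height $u$ the concordance is modelled on $\theta_u\times\reals$ and the difference class of a product smoothing $\theta\times\reals$ is the image under $i_n$ of that of $\theta$. Hence, under the identifications built into $M_n$ and $M_{n+1}$, the map $c$ becomes $\Omega^{n+2}(i_n)$; equivalently $M_{n+1}\circ c=\Omega^{n+2}(i_n)\circ\Omega M_n$, the composite along the top and right edges of the square.

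It remains to identify $c$ with $\lambda^n_{1,1}$. Unravelling $\Diff(D^k,\del)\simeq\Omega\,\mathcal S(D^k)$, a diffeomorphism $g$ corresponds to the based loop of smoothings $v\mapsto(\widehat g_v)_*(\mathrm{std})$, where $\widehat g_v$ is the (PL) Alexander deformation of $g$, genuinely smooth only for $v=0,1$. Feeding in $g=\lambda^n_{1,1}(a)$, whose formula \eqref{eq:lambda} reads $(x,t)\mapsto(a(t)(x),t)$, and using that $a$ is the identity near $t=0,1$ --- so that no smoothing obstruction is created near the two ends of the $t$-coordinate --- and that the Alexander deformation is compatible with the product splitting $D^{n+1}=D^n\times[0,1]$, one identifies the associated loop of smoothings of $D^{n+1}$, up to homotopy rel endpoints, with the loop of concordances assembled from $a$; equivalently, $c$ and $\lambda^n_{1,1}$ are homotopic as maps $\Omega\Diff(D^n,\del)\to\Diff(D^{n+1},\del)$. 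Combined with the previous paragraph this gives $M_{n+1}\circ\lambda^n_{1,1}\simeq\Omega^{n+2}(i_n)\circ\Omega M_n$, proving the Lemma. This last step is the only substantial point: the first two are classical and visibly functorial, whereas here the difficulty --- and the main obstacle --- is the bookkeeping needed to keep straight, across the three equivalences, which loop coordinate plays which role, and to verify the compatibility of the Alexander trick with the product decomposition. The $H$-space and commutativity statements of Theorem \ref{thm:Morlet} are not needed beyond defining the maps.
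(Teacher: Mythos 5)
Your argument is correct in outline, but it takes a genuinely different route from the paper: there the diagram is not re-proved at all. Instead, Burghelea's published statement \cite{Burghelea_announce}*{Theorem 1.3}, which concerns $h_n\colon \Diff(D^n,\del)\to\Omega^{n+1}(Top_n/O_n)$, is transferred from $Top_n$ to $PL_n$ by observing that $h_n=\pi_n\circ M_n$ for the canonical map $\pi_n\colon PL_n/O_n\to Top_n/O_n$, and that after looping $n+1$ times $\pi_n$ becomes a homotopy equivalence because $Top_n/PL_n\simeq K(\Z/2,3)$ by Kirby--Siebenmann \cite{KS}, so $\Omega^{n+1}(Top_n/PL_n)$ is contractible. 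What you propose instead is a direct proof of the $PL$-statement from the smoothing-theoretic construction of $M_n$ (contractibility of $PL(D^n,\del)$ via the Alexander trick, the fibration onto the structure space, the classification $\mathcal{S}(D^k)\simeq\Omega^k(PL_k/O_k)$ and its compatibility with products). That is essentially a sketch of Burghelea's theorem itself; it buys geometric transparency, while the paper's argument buys brevity and rigour modulo the literature, avoiding exactly the semi-simplicial smoothing theory your construction quietly relies on.

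Because you are in effect re-proving the cited theorem, the burden falls on the step you dismiss as bookkeeping, and as written it is asserted rather than established. Two points need real arguments. First, the identification of loops in $\mathcal{S}(D^n)$ with concordances, hence with elements of $\mathcal{S}(D^{n+1})$, is model-dependent: in a point-set picture a continuous family of smoothings $\theta_u$ does not literally assemble into a smooth structure on $D^n\times[0,1]$; one must work in the (semi-)simplicial models of \cite{B-L}, where this is definitional, and then also justify the fibration property and surjectivity of $PL(D^n,\del)\to\mathcal{S}(D^n)$, which are theorems of smoothing theory rather than formalities. Second, and more substantially, the Alexander deformation of $\lambda^n_{1,1}(a)$ cones from a point of $D^{n+1}$ and does not respect the splitting $D^{n+1}=D^n\times[0,1]$; to compare it with the concordance assembled from $a$ you must replace it by a product-adapted null-homotopy, e.g.\ the fiberwise Alexander path $(x,t)\mapsto\bigl(A_u(a(t))(x),t\bigr)$, and use contractibility of $PL(D^{n+1},\del)$ to see that the resulting loop of smoothings is homotopic rel endpoints to the one defining $M_{n+1}(\lambda^n_{1,1}(a))$. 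Spelling this out is precisely the content of \cite{Burghelea_announce}*{Theorem 1.3}; with these details supplied your argument closes, but without them the decisive identification $c\simeq\lambda^n_{1,1}$ is only claimed. Either carry out that verification or, as the paper does, cite Burghelea and reduce the $PL$-case to the $Top$-case via Kirby--Siebenmann.
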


\begin{proof}
The corresponding statement for $n \neq 4$ with $PL_n$ replaced by $Top_n$ is given 
in \cite{Burghelea_announce}*{Theorem 1.3} where Burghela considers the map
$h_n \colon \Diff(D^n, \del) \to \Omega^{n+1}(Top_n/O_n)$.  And indeed Burghelea
remarks \cite{Burghelea_announce}*{p.9} that the analagous versions of his 
results  hold when $Top_n$ is replaced by $PL_n$.  

We give a somewhat indirect argument based on the work of Kirby and Siebenmann 
which deduces the commutativity of the diagram above from \cite{Burghelea_announce}*{Theorem 1.3}.
By definition the
map $h_n$ factors through $M_n$ and the canonaical map $\pi_n \colon 
PL_n/O_n \to Top_n/O_n$:
\[ h_N = \pi_n \circ M_n \colon \Diff(D^n, \del) \longrightarrow \Omega^{n+1}(PL_n/O_n) \longrightarrow \Omega^{n+1}(Top_n/O_n).\]
Now there is a fibration sequence
\[  \Omega^{n+1}(PL_n/O_n) \to \Omega^{n+1}(Top_n/O_n) \to \Omega^{n+1}(Top_n/PL_n) \]
and for $n \geq 5$ there is, by \cite{KS}*{Essay V, 5.0 (1)}, a homotopy equivalence
\[ Top_n/PL_n \simeq K(\Z/2, 3). \]  
Hence the space $\Omega^{n+1}(Top_n/PL_n)$ is contractible and the map $\pi_n$
above is a homotopy equivalence.
It follows that the commutativity of Burghelea's diagram \cite{Burghelea_announce}*{Theorem 1.3} entails the commutativity of the diagram above.
\end{proof}

 An immediate consequence of Theorem \ref{thm:Morlet} and \cite{Burghelea_announce}*{Theorem 1.3} is the following alternative definition of the Gromoll filtration.

\begin{corollary} \label{cor:altGromoll}
%
$ \Gamma^{n+1}_{k+1} = M_*^{-1}S_* \bigl( \pi_{n+1}(PL_{n-k}/O_{n-k}) \bigr).$ 
%
\end{corollary}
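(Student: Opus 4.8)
The plan is to deduce this by iterating Lemma~\ref{lem:Morlet-and-stabilisation} $k$ times. By the definition recalled in the introduction, $\Gamma^{n+1}_{k+1}$ is the image of $\lambda^n_{k,k}\colon\pi_k(\Diff(D^{n-k},\del))\to\pi_0(\Diff(D^n,\del))=\Gamma^{n+1}$, so it suffices to compute this image. First I would unwind the formula~\eqref{eq:lambda} to see that, at the level of spaces, $\lambda^n_{k,k}$ is represented by the $k$-fold composite
\[ \Omega^k\Diff(D^{n-k},\del)\longrightarrow\Omega^{k-1}\Diff(D^{n-k+1},\del)\longrightarrow\dots\longrightarrow\Omega\Diff(D^{n-1},\del)\longrightarrow\Diff(D^n,\del), \]
where the $j$-th arrow is the $(k-1-j)$-fold loop of the stabilisation map $\lambda_{1,1}$ of Lemma~\ref{lem:Morlet-and-stabilisation} (with the ``$n$'' there replaced by $n-k+j$). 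The only subtlety here is that absorbing the $k$ cube coordinates one at a time rather than all at once, as in~\eqref{eq:lambda}, permutes them; but such a permutation is realised by conjugation with a fixed coordinate permutation of $D^n$, which preserves the subgroup $\Gamma^{n+1}_{k+1}$, so it has no effect on the image.

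Next I would stack $k$ looped copies of the homotopy-commutative square of Lemma~\ref{lem:Morlet-and-stabilisation}: the $j$-th of them, for $j=0,\dots,k-1$, is $\Omega^{k-1-j}$ applied to that square with parameter $n-k+j$, which after looping reads
\[ \xymatrix{ \Omega^{k-j}\Diff(D^{n-k+j},\del)\ar[d]\ar[r]^(0.4){\Omega^{k-j}M_{n-k+j}} & \Omega^{n+1}(PL_{n-k+j}/O_{n-k+j})\ar[d]^{\Omega^{n+1}(i_{n-k+j})} \\ \Omega^{k-1-j}\Diff(D^{n-k+j+1},\del)\ar[r] & \Omega^{n+1}(PL_{n-k+j+1}/O_{n-k+j+1}) } \]
(this uses $n-k\geq 4$ so that the lowest rung is covered; the remaining case $k=n-3$ is the trivial identity $0=\Gamma^{n+1}_{n-2}$). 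Assembling these squares produces a homotopy-commutative ladder whose left edge is the composite above, whose right edge is $\Omega^{n+1}$ of the iterated inclusion $PL_{n-k}/O_{n-k}\hookrightarrow PL_n/O_n$, and whose top horizontal map is $\Omega^k M_{n-k}$. Applying $\pi_0$ and using that each $M_\bullet$ is a homotopy equivalence — so that $\pi_k(M_{n-k})\colon\pi_k(\Diff(D^{n-k},\del))\xrightarrow{\cong}\pi_{n+1}(PL_{n-k}/O_{n-k})$ — yields $M_{n*}\circ\lambda^n_{k,k}=\iota_*\circ\pi_k(M_{n-k})$, where $\iota_*$ is the map on $\pi_{n+1}$ induced by the iterated inclusion. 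Passing to images gives $M_{n*}(\Gamma^{n+1}_{k+1})=\iota_*\bigl(\pi_{n+1}(PL_{n-k}/O_{n-k})\bigr)$.

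Finally I would translate this into the stated form. By Theorem~\ref{thm:Morlet}, $M_*=S_*\circ M_{n*}$, with $M_{n*}\colon\Gamma^{n+1}\xrightarrow{\cong}\pi_{n+1}(PL_n/O_n)$ and $S_*\colon\pi_{n+1}(PL_n/O_n)\xrightarrow{\cong}\pi_{n+1}(PL/O)$, and the stabilisation $\pi_{n+1}(PL_{n-k}/O_{n-k})\to\pi_{n+1}(PL/O)$ factors as $S_*\circ\iota_*$ because the stabilisation map $PL_{n-k}/O_{n-k}\to PL/O$ factors through $PL_n/O_n$. Therefore
\[ M_*^{-1}S_*\bigl(\pi_{n+1}(PL_{n-k}/O_{n-k})\bigr)=M_{n*}^{-1}\iota_*\bigl(\pi_{n+1}(PL_{n-k}/O_{n-k})\bigr)=M_{n*}^{-1}\bigl(M_{n*}(\Gamma^{n+1}_{k+1})\bigr)=\Gamma^{n+1}_{k+1}, \]
as claimed. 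I expect the main obstacle to be the very first step: identifying the iterated-loop description of $\lambda^n_{k,k}$ with a composite of the maps $\lambda_{1,1}$ of Lemma~\ref{lem:Morlet-and-stabilisation}, keeping straight the bookkeeping of loop coordinates against absorbed disk coordinates and verifying that the coordinate permutation which arises is indeed harmless at the level of the subgroup $\Gamma^{n+1}_{k+1}$.
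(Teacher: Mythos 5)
Your proof is correct and is essentially the argument the paper has in mind: the paper declares Corollary~\ref{cor:altGromoll} an immediate consequence of Theorem~\ref{thm:Morlet} and Burghelea's Theorem~1.3 (i.e.\ Lemma~\ref{lem:Morlet-and-stabilisation}), and your iteration of that lemma, together with the factorisation $M_*=S_*\circ M_{n*}$, simply spells out the details, including the harmless coordinate-permutation bookkeeping in decomposing $\lambda^n_{k,k}$.
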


The following lemma is presumably well known and in particular is implicit in \cite{B-L}.  Since we could not find a reference, we give a proof.

\begin{lemma} \label{lem:M=SigmaPsi}
$M_* = (\Psi \circ \Sigma) \colon \Gamma^{n+1} \xrightarrow{\cong}
\pi_{n+1}(PL/O)$. 
\end{lemma}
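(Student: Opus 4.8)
The plan is to trace through the definitions of all three maps, reducing each to one and the same ``difference smoothing'' construction, and then to check that the resulting classes agree. First I would make the smoothing-theory isomorphism $\Psi=\Psi_{n+1}$ of \eqref{eq:Gamma} explicit: since $n\geq 4$, the underlying PL manifold of any homotopy $(n+1)$-sphere $\Sigma$ is the standard PL $(n+1)$-sphere, so a choice of PL homeomorphism $\Sigma\to S^{n+1}$ turns the smooth structure of $\Sigma$ into a smoothing of $S^{n+1}$, and $\Psi^{-1}$ sends the class of this smoothing in $[S^{n+1},PL/O]=\pi_{n+1}(PL/O)$ to $[\Sigma]\in\Theta_{n+1}$. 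This is well defined because for homotopy spheres concordance of smoothings coincides with orientation preserving diffeomorphism (an $h$-cobordism of dimension $\geq 6$ is a product).

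Next I would evaluate the composite $\Psi\circ\Sigma$ on $f\in\Gamma^{n+1}$, where $\Sigma_f=D^{n+1}\cup_{\bar f}D^{n+1}$. After choosing collars of the two boundary spheres, the smooth structure on $\Sigma_f$ agrees with the standard one on $S^{n+1}$ outside a tubular neighbourhood $S^n\times\R$ of the equatorial sphere, and on $S^n\times\R$ it is the difference smoothing obtained from the standard product structure by cutting along $S^n\times\{0\}$ and regluing via $\bar f$. Hence $\Psi(\Sigma_f)\in[S^{n+1},PL/O]$ is the class of this difference smoothing. Since $\bar f$ may be taken to equal $f$ on one hemisphere $D^n\subset S^n$ and the identity on the other, the difference smoothing is already standard outside $D^n\times\R$; so $\Psi(\Sigma_f)$ is the image under the stabilisation map $S_*$ of the class in $\pi_{n+1}(PL_n/O_n)$ of the difference smoothing of $D^n\times\R$ --- standard near $\del D^n\times\R$ and near $D^n\times\{\pm\infty\}$ --- obtained by cutting along $D^n\times\{0\}$ and regluing via $f$.

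Finally I would identify this last class with $M_{n*}(f)\in\pi_{n+1}(PL_n/O_n)$. This is exactly how the Morlet equivalence $M_n$ of Theorem \ref{thm:Morlet} acts on $\pi_0$: in \cite{B-L}*{Theorems 4.4, 4.6} the map $M_n$ is built out of precisely this difference smoothing, i.e.\ out of regluing the handle $D^n\times\R$ along the wall $D^n\times\{0\}$ by the given diffeomorphism. Composing with $S_*$ and invoking Theorem \ref{thm:Morlet} then gives $S_*M_{n*}(f)=M_*(f)$, and hence $\Psi\circ\Sigma=M_*$.

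The step that requires genuine care is this last identification: one has to enter the construction of $M_n$ in \cite{B-L} and verify that, relative to the sign and orientation conventions fixed for $\Sigma$ in \eqref{eq:Sigma} and for the smoothing-theory maps, the $\pi_0$-level Morlet map is literally the difference-smoothing formula used above, rather than its negative or a composite with some automorphism of $\pi_{n+1}(PL/O)$. The remaining ingredients are routine but should be recorded: the independence up to concordance of the collar choices defining $\Sigma_f$; the fact that $S_*\colon\pi_{n+1}(PL_n/O_n)\to\pi_{n+1}(PL/O)$ is an isomorphism in this range, which already follows from Theorem \ref{thm:Morlet}; and the vanishing, for $n\geq 4$, of the group of PL homotopy $(n+1)$-spheres, used to treat $\Sigma_f$ as a smoothing of the standard PL sphere.
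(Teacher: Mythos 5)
Your reduction of $\Psi\circ\Sigma$ to a ``difference smoothing'' of $S^{n+1}$ concentrated over $D^n\times\R$ is fine, but the proof breaks down at exactly the step you yourself flag as requiring care: the identification of the $\pi_0$-level Morlet map with the regluing/difference-smoothing formula. You dispose of it by asserting that in \cite{B-L}*{Theorems 4.4, 4.6} ``the map $M_n$ is built out of precisely this difference smoothing,'' but that is not how Burghelea--Lashof construct $M_n$: their equivalence is produced by simplicial methods from the fibration $PL(D^{n},\del)\to B\Diff(D^{n},\del)=PL(D^{n},\del)/\Diff(D^{n},\del)$ together with smoothing theory for the classifying spaces, and what $M_*$ does on $\pi_0$ has to be extracted by unwinding the boundary map of that fibration. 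Since the whole content of Lemma \ref{lem:M=SigmaPsi} is that the Burghelea--Lashof comparison map agrees with the naive cut-and-reglue description implicit in $\Sigma$ and $\Psi$, quoting that agreement as if it were the definition of $M_n$ makes the argument circular at its only nontrivial point; this is precisely why the paper states that the lemma is ``implicit'' in \cite{B-L} but nowhere proved there.

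For comparison, the paper's proof does the missing work as follows: it describes $M_*(f)$ concretely by applying the Alexander trick to $f$ to get a loop $\gamma$ in $PL(D^n,\del)$ realising the boundary map of the fibration above, and then takes the loop of PL-derivatives $t\mapsto D(\gamma_t)$, compared against the standard vector bundle structure on the PL-microbundle of $D^n$, to obtain a map $S^{n+1}\to PL_n/O_n$ whose stabilisation is $M_*(f)$ (this uses \cite{B-L}*{proof of Theorem 4.2 and Section 1}). It then uses the microbundle description of $\Psi$ from \cite{Lueck} (two $O_{n+1}$-lifts of one $PL_{n+1}$-bundle over $S^{n+1}$, compared via the Alexander-trick PL homeomorphism $\Sigma_f\cong S^{n+1}$) and checks that the family of PL homeomorphisms and derivatives defining $M_*(f)$, extended by the identity over the second hemisphere and stabilised in the vertical direction, patches to exactly the comparison data defining $\Psi(\Sigma_f)$. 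Your smoothing-theoretic reformulation of $\Psi$ is a legitimate alternative starting point, but to complete your argument you would still have to carry out the analogous unwinding of the Burghelea--Lashof boundary map (or translate their classifying-space smoothing theory into the concordance-of-smoothings picture) and check the orientation conventions you mention; as written, that step is asserted rather than proved.
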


\begin{proof}
%

We use the description of $\Psi \colon \Theta_{n+1} \cong \pi_{n+1}(PL/O)$ given 
in \cite{Lueck}*{proof of Theorem 6.48}. 
Given an exotic sphere $\Sigma_f$ obtained from a diffeomorphism
$f \in \Diff(D^n, \del)$, take the PL-homeomorphism $u \colon \Sigma_f \cong
S^{n+1}$ to the standard sphere coming from the Alexander trick. There is an
associated ``derivative'' map between the PL-microbundles of $\Sigma_f$ and
$S^{n+1}$. Using the smooth structures, these PL-bundles are induced from the
smooth tangent bundles which are of course vector bundles. Pulling back with
$u$ to $S^{n+1}$, we then have two $O_{n+1}$-structures on the same
$PL_{n+1}$-bundle over $S^{n+1}$, and the difference of the lifts of structure
group gives a pointed map $S^{n+1}\to PL_{n+1}/O_{n+1}$. By stabilization we
get an 
element of $\pi_{n+1}(PL/O)$, which is by definition $\Psi(\Sigma_f)$.

On the other hand, the map $M_* \colon \pi_0(\Diff(D^{n},\partial)) \to \pi_{n+1}(PL/O)$ 
from \cite{B-L} is defined (after we strip off the technicalities 
associated to the  use of simplicial methods) by first looking at the loop 
$\gamma\colon [0,1]\to PL(D^{n},\partial)$ obtained by applying the
Alexander trick to $f$, with induced path $\bar\gamma\colon [0,1]\to
PL(D^{n},\partial)/\!\Diff(D^{n},\partial)$. The latter corresponds to the inverse in the boundary 
map of the fibration 
\[ PL(D^{n},\partial)\to B\!\Diff(D^{n},\partial)=PL(D^{n},\partial)/\Diff(D^{n},\partial) \, , \]
compare \cite{B-L}*{proof of Theorem 4.2}. The
path of PL-derivatives $t\mapsto D(\gamma_t)$ gives, as above by comparing the
pullbacks of the vector bundle
structure on the PL-microbundle of $D^n$ to the standard vector bundle
structure, a loop of maps  
from $(D^{n},\partial)$ to $PL_{n}/O_{n}$, i.e.~a map $S^{n+1} \to PL_n/O_n$. By
\cite{B-L}*{proof of 4.2 and Section 1}, its stabilization represents
$M_*(\psi) \in \pi_{n+1}(PL/O)$.

Observe that the family of PL-homeomorphisms $D^n\to D^n$ just constructed,
extended by the
identity over a ``second hemisphere'', patch together to the PL-homeomorphism
between the homotopy sphere $\Sigma_f$ and $S^{n+1}$ used in the
definition of $\Psi \circ \Sigma$. Moreover, if we stabilize the family of
differentials by 
the identity of the vertical direction, we obtain the differential of that
PL-homeomomorphism. Finally, the underlying vector bundle structures on the
PL-microbundles patch
together and stabilize to the vector bundle structures on the PL-microbundles
of $\Sigma_f$ and
$S^{n+1}$ encountered above.
It follows that the stable comparison maps $S^{n+1} \to PL/O$ coincide,
i.e.~$M_* = \Psi \circ \Sigma$.
\end{proof}

\begin{remark}
  It is interesting to observe that
$\Psi\circ\Sigma$ factors by construction through
$\pi_{n+1}(PL_{n+1}/O_{n+1})$, whereas $M_*$ even factors through $\pi_{n+1}(PL_n/O_n)$.
\end{remark}


\subsection{Homotopy spheres} \label{subsec:KM}
In this subsection we review a number of important isomorphisms used to study
the group of homotopy spheres $\Theta_{n+1}$.  More information and proofs
can be found in \cite{Lueck}*{6.6} and \cite{Le}*{Appendix}.  Let $G :=
\lim_{k \to 
  \infty}G(k)$ denote the stable group of homotopy self-equivalences of
spheres, let $\pi_i^S$ denote the $i$th stable stem and let $\Omega_i^{\rm
  fr}$ denote $i$-dimensional framed bordism group.  We have isomorphisms
\[ \pi_i(G) \cong \pi_i^S \cong \Omega_i^{\rm fr}  \]
where the first isomorphism may be found in \cite{MM}*{Corollary 3.8} and the
second is the Pontrjagin-Thom isomorphism. 

The canonical map $O \to G$ induces the stable $J$-homomorphism on homotopy
groups $J_i \colon \pi_i(O) \to \pi_i(G)$.
%
The group ${\im}(J_i) \subset \pi_i(G)$ is a cyclic summand 
and the group $\coker(J_i)$ maps isomorphically onto the torsion subgroup of $\pi_i(G/O)$ under the canonical map $q \colon G \to G/O$.  Moreover there is an isomorphism $\pi_i(G/O) \cong \Omega^{\rm alm}_i $ where $\Omega^{\rm alm}_*$ denotes almost framed bordism (cycles are manifolds with a chosen base point and a framing of the stable normal bundle on the complement of this base point).  

\begin{theorem}[\cite{KM}*{Section 4}] \label{thm:KM}
For $n \geq 4$ the abelian group $\Theta_{n+1}$ is finite and lies in an exact
sequence 
\[ 0 \dlra{} bP_{n+2} \dlra{} \Theta_{n+1} \dlra{\Phi} \coker(J_{n+1}) \]
where $bP_{n+2}$ is the finite cyclic subgroup of homotopy spheres bounding
parallelizable manifolds. By  \cite{KM}*{Theorem 6.6}, $\Phi$ is surjective if
$n$ is odd. 
\end{theorem}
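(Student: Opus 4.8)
The plan is to follow Kervaire and Milnor's surgery-theoretic analysis \cite{KM} of $\Theta_{n+1}$. The starting point is that every homotopy $(n+1)$-sphere $\Sigma$ is stably parallelisable, so that a choice of stable framing $\mathcal F$ produces, via the Pontryagin--Thom construction, a class $[\Sigma,\mathcal F]\in\Omega^{\rm fr}_{n+1}\cong\pi^S_{n+1}$. Two stable framings of $\Sigma$ differ by a homotopy class of maps $\Sigma\simeq S^{n+1}\to O$, that is, by an element of $\pi_{n+1}(O)$, and the resulting change of framed bordism class is precisely its image under $J_{n+1}$; hence the reduction $[\Sigma,\mathcal F]\bmod\im(J_{n+1})$ does not depend on $\mathcal F$. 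One then checks that this reduction depends only on the oriented diffeomorphism type of $\Sigma$ and is additive under connected sum, which defines the homomorphism $\Phi\colon\Theta_{n+1}\to\coker(J_{n+1})$.

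Next I would identify $\ker\Phi$ with $bP_{n+2}$. If $\Sigma=\partial W$ with $W^{n+2}$ parallelisable, then a framing of $W$ restricts to a framing of $\Sigma$ bounding a framed manifold, so $\Phi(\Sigma)=0$. Conversely, if $\Phi(\Sigma)=0$ then for a suitable framing $\Sigma$ bounds a framed manifold $W^{n+2}$; performing framed surgery on the interior of $W$ below the middle dimension makes $W$ roughly $\lfloor(n+2)/2\rfloor$-connected without altering $\partial W=\Sigma$, and such a highly connected framed manifold is parallelisable, so $\Sigma\in bP_{n+2}$.

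For the remaining assertions: $\pi^S_{n+1}$ is finite by Serre's finiteness theorem, hence so is $\coker(J_{n+1})$; and $bP_{n+2}$ is finite cyclic because, for a suitably chosen highly connected framed manifold $W^{n+2}$ bounding a given element of $bP_{n+2}$, whether $\partial W$ is the standard sphere is controlled by a single middle-dimensional invariant of $W$: there is no such invariant when $n+2$ is odd, whence $bP_{n+2}=0$; it is the signature $\sigma(W)\in 8\Z$ when $n+2\equiv 0\pmod 4$, and the group of values it can take modulo the signatures of closed almost-parallelisable manifolds (a subgroup determined by Bernoulli numbers and $\im(J)$) is the finite cyclic group $bP_{4k}$, a generator of which is realised by Milnor's $E_{8}$-plumbing; and it is the Kervaire--Arf invariant in $\Z/2$ when $n+2\equiv 2\pmod 4$, so that $bP_{4k+2}\subseteq\Z/2$. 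In every case $bP_{n+2}$ is finite cyclic, and the exact sequence exhibits $\Theta_{n+1}$ as an extension of a subgroup of the finite group $\coker(J_{n+1})$ by $bP_{n+2}$, hence $\Theta_{n+1}$ is finite.

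Finally, for surjectivity of $\Phi$ when $n$ is odd (in which case $n+2$ is odd, so $bP_{n+2}=0$ and $\Phi$ is injective; surjectivity then makes it an isomorphism): given a class in $\coker(J_{n+1})$, lift it to $\pi^S_{n+1}\cong\Omega^{\rm fr}_{n+1}$ and represent it by a closed framed manifold $M^{n+1}$; framed surgery below the middle dimension makes $M$ into a $\lfloor(n+1)/2\rfloor$-connected framed manifold, and the obstruction to completing the surgery to a homotopy sphere lies in the middle dimension. When $n+1$ is odd there is no obstruction; when $n+1\equiv 0\pmod 4$ it is the signature, which vanishes on a framed manifold by Hirzebruch's signature theorem; and when $n+1\equiv 2\pmod 4$ it is the Kervaire invariant, and the argument of \cite{KM}*{Theorem 6.6} disposes of the relevant cases. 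Then $M$ is framed-cobordant to a homotopy sphere mapping to the chosen class. The main obstacle throughout is the surgery machinery itself --- proving that framed surgery below the middle dimension is always possible and leaves a homotopy-sphere boundary undisturbed, and controlling the middle-dimensional step, which is where the quadratic refinement of the intersection form, the Arf invariant, Wall's simply connected surgery theory, and the explicit $E_{8}$-plumbing construction (needed to see $bP_{4k}\ne 0$) all enter.
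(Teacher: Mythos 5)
The paper itself gives no proof of this theorem --- it is quoted from Kervaire--Milnor --- so the only comparison available is with the argument of \cite{KM}, which your sketch follows for the most part: stable parallelizability plus Pontryagin--Thom, the framing ambiguity being exactly $\im(J_{n+1})$, the identification $\ker\Phi = bP_{n+2}$, and finiteness from Serre's theorem together with the signature/Arf analysis showing $bP_{n+2}$ is finite cyclic. (A small simplification: in the step $\ker\Phi\subset bP_{n+2}$ no surgery is needed, since a compact stably parallelizable manifold with non-empty boundary is already parallelizable.)

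The genuine gap is in the final surjectivity assertion. Taking the hypothesis ``$n$ odd'' at face value forces the sphere dimension $n+1$ to be even, and in the case $n+1\equiv 2\pmod 4$ you dismiss the middle-dimensional obstruction with ``the argument of \cite{KM}*{Theorem 6.6} disposes of the relevant cases''. No argument can do this, because surjectivity is false there in general: for $n+1\equiv 2\pmod 4$ the image of $\Theta_{n+1}\to\coker(J_{n+1})$ is the kernel of the Kervaire-invariant homomorphism, a proper subgroup whenever a closed framed manifold of Kervaire invariant one exists (for instance $n+1=6$, where $\Theta_6=0$ while $\coker(J_6)\cong\Z/2$; similarly $n+1=14,30,62,126$). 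What Kervaire and Milnor prove unconditionally is surjectivity when the sphere dimension $n+1$ is odd (no middle-dimensional obstruction) and when $n+1\equiv 0\pmod 4$ (the signature of a closed framed manifold vanishes); the hypothesis ``$n$ odd'' in the statement is an indexing slip for ``$n+1$ odd'', as the paper's own application to $\Theta_{8j-7}$ with $8j-7$ odd confirms. So the case you actually need is precisely the one you mention and then discard as irrelevant (``when $n+1$ is odd there is no obstruction''), and its substantive content --- that the odd-dimensional middle surgery step can always be completed for framed manifolds --- is the hard part of \cite{KM}*{Sections 5--6} and needs more than a parenthetical remark.
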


\begin{proposition}
\label{prop:x}
  The canonical map $p \colon PL/O \to G/O$ satisfies
  \begin{equation*}
q_* \circ \Phi = p_*
  \circ \Psi \colon \Theta_{n+1} \to \pi_{n+1}(G/O).
\end{equation*}

\end{proposition}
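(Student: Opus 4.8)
The plan is to unwind all the maps in question on the level of bundle data and compare. Recall that $\Phi \colon \Theta_{n+1} \to \coker(J_{n+1})$ sends a homotopy sphere $\Sigma$ to the framing obstruction of a parallelizable coboundary; concretely, under the identification $\pi_{n+1}(G/O) \cong \Omega^{\rm alm}_{n+1}$ of almost-framed bordism, $q_* \circ \Phi([\Sigma])$ is represented by $\Sigma$ itself with its (essentially unique) spin, hence stable-framing, structure off a point. On the other side, $\Psi(\Sigma) \in \pi_{n+1}(PL/O)$ is, by the description recalled in the proof of Lemma \ref{lem:M=SigmaPsi}, the difference class of two $O$-lifts of the stable $PL$-microbundle of $S^{n+1}$ obtained by transporting the smooth tangent (equivalently stable normal) bundle of $\Sigma$ via the Alexander-trick $PL$-homeomorphism $u \colon \Sigma \cong S^{n+1}$. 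Applying $p \colon PL/O \to G/O$ forgets the $PL$-structure and remembers only the underlying spherical fibration, so $p_* \circ \Psi(\Sigma)$ is the difference of two $O$-reductions of the \emph{trivial} stable spherical fibration over $S^{n+1}$ — and such a difference class in $\pi_{n+1}(G/O) \cong \Omega^{\rm alm}_{n+1}$ is exactly the almost-framed manifold $(\Sigma, \text{framing off a point})$.

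So the key steps, in order, are: (i) identify $q_* \circ \Phi([\Sigma])$ with the almost-framed bordism class of $\Sigma$, using the definition of $\Phi$ from \cite{KM}*{Section 4} together with the stated isomorphism $\coker(J_{n+1}) \cong \text{torsion of } \pi_{n+1}(G/O)$ via $q$ and $\pi_{n+1}(G/O) \cong \Omega^{\rm alm}_{n+1}$; (ii) recall from the proof of Lemma \ref{lem:M=SigmaPsi} the explicit bundle-theoretic formula for $\Psi$; (iii) observe that $p \colon PL/O \to G/O$ fits into the diagram of fibrations with fibers $O$ over $PL$ resp. $G$, so that composing with $p$ replaces ``two $O$-lifts of a $PL$-bundle'' by ``two $O$-lifts of the underlying spherical fibration'', and that over $S^{n+1}$ this spherical fibration is trivial since $u$ is a homotopy equivalence to the standard sphere; (iv) match the resulting difference class with the almost-framed bordism class of $\Sigma$, checking compatibility of the two framings-off-a-point (both come from the smooth stable normal bundle of $\Sigma$ trivialized away from the base point), so that the two sides of (i) and (iv) agree.

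The main obstacle I expect is step (iv): pinning down that the two independently-constructed elements of $\Omega^{\rm alm}_{n+1}$ carry \emph{the same} almost-framing, not merely the same underlying manifold. This is a sign/normalization bookkeeping issue — one must track how the Alexander-trick identification $u$ interacts with the standard framing of $S^{n+1}$ minus a point, and verify that the ``difference of lifts'' convention used to define $\Psi$ matches the convention under which $\Phi$ is the obstruction to extending a framing. Once the identifications in (i) and (iii) are set up carefully with consistent orientations, this comparison should reduce to the observation that both constructions encode precisely the failure of the stable normal bundle of $\Sigma$ to admit a framing extending the one on the complement of a point; since $\Sigma$ is a homotopy sphere this obstruction lives in $\coker(J_{n+1})$ and is visible equally through $\Phi$ (smooth picture) and through $p_* \circ \Psi$ ($PL$ picture pushed to the spherical-fibration picture). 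I would therefore phrase the final step as: both $q_* \circ \Phi$ and $p_* \circ \Psi$ coincide with the composite $\Theta_{n+1} \to \Omega^{\rm alm}_{n+1} \cong \pi_{n+1}(G/O)$ sending a homotopy sphere to itself with its canonical almost-framing, whence they are equal.
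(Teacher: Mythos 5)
Your proposal is correct and is essentially the paper's own argument: both sides are identified with the canonical composite $\Theta_{n+1}\to\Omega^{\rm alm}_{n+1}\cong\pi_{n+1}(G/O)$ --- $q_*\circ\Phi$ by the geometric definition of $\Phi$ from Kervaire--Milnor, and $p_*\circ\Psi$ by the commutativity of the smoothing-theory square relating $\Psi$, $p_*$ and almost-framed bordism. The only real difference is that the paper delegates that commutativity (your steps (ii)--(iv), including the framing/normalization bookkeeping you flag) to L\"uck's Theorem 6.48 instead of verifying it at the bundle level.
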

\begin{proof}
  The statement follows from the commutativity of the squares
  \begin{equation*}
    \begin{CD}
      &&\pi_{n+1}(PL/O) @<{\Psi}<{\iso}< \Theta_{n+1}\\
      && @VV{p_*}V @VVV\\
      &&\pi_{n+1}(G/O) @<{\iso}<< \Omega^{\rm alm}_{n+1}\\
     &&  @AA{q_*}A @AAA\\
      \pi_{n+1}^S @>{\iso}>> \pi_{n+1}(G) @<{\iso}<< \Omega^{\rm fr}_{n+1}
    \end{CD}
  \end{equation*}
  which is explained in \cite{Lueck}*{Theorem 6.48}.  The homomorphism
  $\Phi$ is geometrically defined as the composition of the upper right
  homomorphism, the isomorphism $\Omega^{\rm alm}_{n+1} \cong \pi_{n+1}(G/O)$ and the
  inverse of the isomorphism induced by $q_*$ from $\coker(J_{n+1})$ to the torsion 
  subgroup of $\pi_{n+1}(G/O)$. 
\end{proof}

\subsection{The $\alpha$-invariant} \label{subsec:alpha}
Recall from \cite{H}*[Section 4.2] that the $\alpha$-invariant 
is the ring homomorphism 
$\alpha \colon \Omega_*^{\Spin} \to KO_{*}$
which associates to a spin bordism class the KO-valued index of the Dirac
operator of a 
representative spin manifold.  We also write $\alpha$ for the corresponding
invariant on framed bordism: 
\begin{equation} \label{eq:alpha}
 \alpha \colon \Omega_*^{\rm fr} \to \Omega_*^{\Spin} \to KO_{*} \, . 
 \end{equation}
Under the Pontrjagin-Thom isomorphism $\Omega_*^{\rm fr} \cong \pi_*^S$ the
$\alpha$-invariant has the following interpretation as Adams' $d$-invariant 
\cite{A}*{Section 7}, $d_\R \colon \pi_*^S \to KO_*$, 
which was used already in \cite{H}*{p.~44}, compare \cite{M}*{Section 3}. 

\begin{lemma} \label{lem:alpha=d}
Under the Pontryagin-Thom isomorphism $\Omega_*^{\rm fr}\iso
\pi_*^S$ the $\alpha$-in\-va\-riant $\alpha \colon \Omega_{8j+1}^{\rm fr} \to
KO_{8j+1}$ 
may be identified with $d_\R \colon \pi_{8j+1}^S \to
KO_{8j+1}$ .  
\end{lemma}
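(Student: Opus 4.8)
The plan is to recognise both $\alpha|_{\Omega_*^{\rm fr}}$ and $d_\R$ as the homomorphism induced on stable homotopy groups by the unit of the ring spectrum $KO$, and to deduce their equality from the multiplicativity of the Atiyah--Bott--Shapiro orientation.

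First I would recall the three spectrum-level inputs. The $\alpha$-invariant $\alpha\colon \Omega_*^{\Spin} = \pi_* M\Spin \to KO_* = \pi_* KO$ is induced by the Atiyah--Bott--Shapiro orientation $\widehat\alpha\colon M\Spin \to KO$; this is a multiplicative map, and the fact that the resulting class in $KO_m$ is the $KO$-valued index of the real Dirac operator of a representing spin manifold is precisely the $KO$-theoretic Atiyah--Singer index theorem. In particular $\widehat\alpha$ sends the unit of $M\Spin$, the class of a point, to the unit $1\in KO_0=\mathbb{Z}$ of $KO$. Second, under the Pontryagin--Thom isomorphism $\Omega_*^{\rm fr}\cong \pi_*^S = \pi_*\mathbb{S}$, the forgetful map $\Omega_*^{\rm fr}\to \Omega_*^{\Spin}$ appearing in \eqref{eq:alpha} — which passes from a framing of the stable normal bundle to the induced spin structure — is exactly the map on homotopy groups induced by the unit $u_{M\Spin}\colon \mathbb{S}\to M\Spin$ of the Thom ring spectrum $M\Spin$. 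Third, Adams' real $d$-invariant $d_\R\colon \pi_*^S\to KO_*$ is, by its definition in \cite{A}*{Section 7}, the $KO$-theory Hurewicz homomorphism, i.e.\ the map induced on homotopy groups by the unit $u_{KO}\colon \mathbb{S}\to KO$.

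Granting these identifications, the lemma is immediate. Since $\widehat\alpha$ carries units to units we have $\widehat\alpha\circ u_{M\Spin} = u_{KO}\colon \mathbb{S}\to KO$ (equivalently, this composite lies in $[\mathbb{S},KO]=\pi_0 KO=\mathbb{Z}$ and sends $1\mapsto \alpha(\mathrm{pt})=1$, hence is the unit). Passing to $\pi_{8j+1}$ identifies the composite $\pi_{8j+1}^S\cong \Omega^{\rm fr}_{8j+1}\to \Omega^{\Spin}_{8j+1}\xrightarrow{\alpha} KO_{8j+1}$ with $d_\R\colon \pi_{8j+1}^S\to KO_{8j+1}$, which is the assertion. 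The statement is in fact degree-independent; the restriction to degree $8j+1$, where moreover $KO_{8j+1}\cong\mathbb{Z}/2$, is only what is needed in the sequel and removes any normalisation worry.

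The one genuinely delicate point is the first input: the identification of the analytically defined $\alpha$-invariant of a spin manifold with the Atiyah--Bott--Shapiro ring map on homotopy, which is exactly where the index theorem enters and which already underlies Hitchin's computations in \cite{H}*{p.~44}. If one prefers to avoid spectrum-level language, the comparison in the single degree needed can instead be extracted from Milnor's explicit analysis of spin manifolds in \cite{M}*{Section 3}, matching the mod $2$ index of the Dirac operator in dimension $8j+1$ with Adams' invariant; either route suffices for Lemma \ref{lem:alpha=d}.
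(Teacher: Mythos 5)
Your argument is correct, but note that the paper does not actually prove Lemma \ref{lem:alpha=d}: it is asserted with pointers to Adams \cite{A}*{Section 7}, Hitchin \cite{H}*{p.~44} and Milnor \cite{M}*{Section 3}, i.e.\ treated as known. What you supply is a genuine, degree-independent proof at the level of spectra: $\alpha|_{\Omega_*^{\Spin}}$ is induced by the Atiyah--Bott--Shapiro map $M\Spin\to KO$ (the index theorem enters exactly here, and you correctly isolate this as the only delicate input), the forgetful map $\Omega_*^{\rm fr}\to\Omega_*^{\Spin}$ corresponds under Pontryagin--Thom to the unit $\mathbb{S}\to M\Spin$, and $d_\R$ is the $KO$-Hurewicz map; then $[\mathbb{S},KO]\iso\pi_0(KO)\iso\Z$ forces the composite $\mathbb{S}\to M\Spin\to KO$ to be the unit since it sends $1\mapsto\alpha(\mathrm{pt})=1$. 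That last observation is a nice economy: you do not even need full multiplicativity of the ABS orientation, only the normalisation in degree $0$ together with the fact that $\alpha$ is induced by some spectrum map. Two small points worth making explicit if this were written out: the identification of Adams' cohomologically defined $d_\R$ (induced maps on $\widetilde{KO}$ of spheres) with the unit-induced homomorphism $\pi_*^S\to KO_*$ requires the standard duality/Bott-periodicity bookkeeping, and the compatibility of Pontryagin--Thom with the unit of $M\Spin$ deserves a sentence; both are routine. Your closing remark that the single degree $8j+1$ could instead be handled via Milnor's analysis is exactly the route the paper gestures at with its citation of \cite{M}*{Section 3}, so your spectrum-level argument and the paper's reference-based treatment are complementary rather than in conflict.
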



Recall that $KO_*$ satisfies Bott periodicity of period $8$ with Bott
generator $\beta\in KO_8\cong\integers$.   By \cite{A}*{Theorems 7.18 and
  12.13}, for all $k \geq 1$ there are (not uniquely defined) Adams' elements
$\mu_{8k+1} \in \pi_{8k+1}^S = \Omega^{\rm fr}_{8k+1}$ satisfying
\[ \alpha(\mu_{8k+1}) = \alpha(\eta) \beta^k \neq 0 \in KO_{8k+1} \, , \]
where $\eta \in \pi_1^S$ generates the $1$-stem and $\alpha(\eta)$ generates
$KO_{1}$.  Since $\alpha$ is a ring homomorphism we see that $\alpha(\eta
\mu_{8k+1}) = \alpha(\eta^2) \beta^k \neq 0 \in KO_{8k+2}$, and combining Lemma
\ref{lem:alpha=d} with \cite[Proposition 12.14]{A} we have 
%
\begin{equation} \label{eq:alpha_mult}
\alpha (\mu_{8j+1} \cdot \mu_{8k+1} ) = \alpha(\eta^2) \beta^{j+k} \neq 0 \in KO_{8(j+k) + 2} \,. 
\end{equation}



Recall that an element $x\in \pi_j^S=\lim_k \pi_{j+k}(S^k)$ is said to
\emph{live on $S^k$} if there is $x_k\in
\pi_{j+k}(S^k)$ which maps
to $x$ under the canonical homomorphism.

The next crucial property of the elements $\mu_{8k+1}$ is that (at least if we
make suitable choices here) they all
live on $S^4$.

\begin{lemma} \label{lem:Curtis}
For suitable choices, the (not uniquely defined) homotopy class $\mu_{8j+1} \in
\pi_{8j+1}^S$ lives on the $5$-sphere and moreover there
is $\mu_{8j+1, 5}\in \pi_{8j+6}(S^5)$ with $2 \mu_{8j+1, 5} = 0$. It follows
that there is a corresponding homotopy class $\mu_{8j+1, 9}\in \pi_{8j+10}(S^9)$ of
order $2$.
\end{lemma}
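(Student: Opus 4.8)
The plan is to exploit the known structure of the relevant stable stems together with explicit low-dimensional representatives, and then to stabilise carefully controlling the sphere on which the element lives. First I would recall that the $\alpha$-invariant (equivalently Adams' $d_\R$) is non-zero on $\mu_{8j+1}$, so in particular $\mu_{8j+1}$ is a non-trivial element of order $2$ in $\pi_{8j+1}^S$; this follows from $\alpha(\mu_{8j+1}) = \alpha(\eta)\beta^j$ and the fact that $\alpha(\eta)$ generates $KO_1 \cong \Z/2$. The order-$2$ claim is essentially built in because $\alpha(\eta)$ has order $2$ and $\mu_{8j+1}$ is, up to choices, characterised by its $d_\R$-value in the relevant Adams/image-of-$J$ analysis of $\cite{A}$.

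The key geometric input is an unstable desuspension statement: I would invoke the analysis of the metastable range for the EHP sequence, or more concretely the classical results of Curtis (hence the lemma's name) and Mahowald on the $\eta$-family and the Adams $\mu$-family, to produce a lift $\mu_{8j+1,5} \in \pi_{8j+6}(S^5)$ mapping to $\mu_{8j+1}$ under stabilisation. The dimension count is that $8j+6 = (8j+1)+5$, and the element has Hopf-invariant/suspension behaviour allowing it to live this low; the relevant constraint is that $2(8j+1) - 1 = 16j+1 \geq 8j+6$, i.e. $8j \geq 5$, which holds for $j \geq 1$, so we are in (or near) the stable range where the suspension $\pi_{8j+6}(S^5) \to \pi_{8j+7}(S^6) \to \cdots$ is already surjective onto $\pi_{8j+1}^S$. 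Order $2$ of $\mu_{8j+1,5}$ can then be arranged: either the unstable group in this range is already $2$-torsion in the relevant summand, or one replaces $\mu_{8j+1,5}$ by a suitable odd multiple and uses that $2\mu_{8j+1}=0$ stably together with the injectivity of suspension on the relevant $2$-torsion subgroup in this range. Finally, suspending four more times yields $\mu_{8j+1,9} \in \pi_{8j+10}(S^9)$, still of order $2$ since suspension is a homomorphism and the stabilisation map $\pi_{8j+10}(S^9) \to \pi_{8j+1}^S$ is injective in this (now firmly stable) range, so the element cannot acquire or lose $2$-torsion.

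The main obstacle I anticipate is the unstable desuspension to $S^5$ together with the order-$2$ bookkeeping: proving that $\mu_{8j+1}$ (rather than merely $\eta\cdot(\text{something})$ or a larger suspension) lifts all the way to $\pi_{8j+6}(S^5)$ requires either an explicit appeal to Curtis--Mahowald-type computations of the unstable $\eta$-family or a careful EHP-sequence argument controlling the Hopf invariant of $\mu_{8j+1}$. The secondary subtlety is that $\mu_{8j+1}$ is only defined up to indeterminacy (elements of $\im(J)$ and decomposables), so one must choose the representative so that both the live-on-$S^5$ property and the order-$2$ property hold simultaneously; I would handle this by first fixing any lift realising $d_\R \neq 0$, then adjusting by elements of the indeterminacy (which themselves live low and are $2$-torsion in the image of $J$) to kill any $4$-torsion component. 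The passage from $S^5$ to $S^9$ is then routine, requiring only that we remain in the stable range, which the inequalities above guarantee for all $j \geq 1$.
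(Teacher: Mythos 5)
There is a genuine gap, and it sits exactly at the heart of the lemma. Your stable-range bookkeeping is backwards: Freudenthal gives that $\pi_{n+k}(S^n)\to\pi_k^S$ is surjective (resp.\ an isomorphism) only when the sphere dimension is large compared to the stem, roughly $n\ge k+1$ (resp.\ $n\ge k+2$). Here the stem is $k=8j+1$ while the spheres are $S^5$ and $S^9$, so for every $j\ge 1$ both groups $\pi_{8j+6}(S^5)$ and $\pi_{8j+10}(S^9)$ lie far \emph{below} the stable range; the inequality $2(8j+1)-1\ge 8j+6$ you compute is not the relevant one. Consequently neither the claimed surjectivity of $\pi_{8j+6}(S^5)\to\pi_{8j+1}^S$ nor the claimed injectivity of $\pi_{8j+10}(S^9)\to\pi_{8j+1}^S$ is available, and with them collapses your order-$2$ argument (the ``replace by an odd multiple and use injectivity of suspension'' step) as well as the passage from $S^5$ to $S^9$ ``without gaining or losing $2$-torsion''. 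Note also that nonvanishing of $d_\R$ on $\mu_{8j+1}$ does not by itself force $2\mu_{8j+1}=0$, since $d_\R$ lands in $\Z/2$; the order-$2$ property is an extra fact that has to be built into the construction. Finally, quoting Curtis (or Mahowald) does give a sphere-of-origin statement --- the paper acknowledges this in a remark --- but it does not by itself produce a representative on $S^5$ \emph{of order two}, which is precisely what is needed later (the conclusion $2f_j=0$).

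The paper's proof avoids all stable-range claims by redoing Adams' construction unstably with Toda brackets. One starts with Toda's element $\sigma'''\in\pi_{12}(S^5)$ of order $2$, sets $\alpha_1:=E\sigma'''\in\pi_{13}(S^6)$, for which Toda shows $0\in\{2,\alpha_1,2\}$, and inductively chooses $\alpha_s\in\{\alpha_{s-1},2,\alpha_1\}$; the module property and the Leibniz rule for Toda brackets show inductively that $\{2,\alpha_s,2\}$ consists exactly of multiples of $2$, which both keeps the induction alive ($2\alpha_s=0$) and then gives
\begin{equation*}
2\mu_{8j+1,k-1}\in\{\eta_{,k-1},2,\alpha_j\}\cdot 2=\eta_{,k-1}\circ\{2,\alpha_j,2\}\subset\eta_{,k-1}\circ 2\pi_{k+8j}(S^k)=0 ,
\end{equation*}
with $\mu_{8j+1,k-1}\in\{\eta_{,k-1},2,\alpha_j\}$ the desired unstable class; nontriviality of the $\alpha$-invariant is checked via Adams' $e_\complexs$-invariant and his formula $e_\complexs\{x,2,y\}=2e_\complexs(x)e_\complexs(y)$. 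If you want to salvage your approach, you would have to either carry out this explicit unstable Toda-bracket construction or find an independent argument that some order-$2$ class on $S^5$ maps to $\mu_{8j+1}$; generic desuspension and stable-range reasoning cannot supply it.
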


\begin{proof}
The  statement follows by carefully inspecting Adams'
construction of the homotopy class $\mu_{8j+1} \in \pi_{8j+1}^S$, involving
Toda brackets.

Let us recall that, given homotopy classes of maps $u\colon S^a\to S^b$,
$v\colon S^b\to S^c$ and $w\colon S^c\to S^d$ such that $[v\circ u]=0$ and $[w\circ
v]=0$, there is a set $\{w,v,u\}$ of homotopy
classes of maps $S^{a+1}\to S^c$, the Toda brackets of $w,v,u$, a kind of
secondary composition. The elements
of the set depend on choices of null-homotopies for $v\circ u$ and $w\circ v$,
and indeed (for $a\ge 1$) $\{w,v,u\}$ is a coset of $[E u]\circ
\pi_{b+1}(S^c)+\pi_{a+1}(S^b)\circ [w] \in \pi_{a+1}(S^c)$, where $E$ denotes
 suspension.

Now, for the construction of the $\mu_{8j+1,5}$ on starts with a homotopy
class $\alpha_1\colon S^{k+7}\to S^k$ of order 
$2$ such that $\{2,\alpha,2\}$ contains $0$. Here $2$ stands for the self map
of the sphere of degree $2$.

One then chooses inductively $\alpha_s\colon S^{k+8s-1}\to S^k$ to be any
element in the Toda bracket $\{\alpha_{s-1},2,\alpha\}$. For notational
simplicity we write $\alpha$ also instead of the 
appropriate suspension of it. Note that in this proof we follow Adams and use $`\alpha_s'$
to refer to a certain homotopy class. This should not be confused with the
$\alpha$-invariant of \eqref{eq:alpha}.

For the induction to work we have to show that
$[2\alpha_s]=0\in\pi_{k+8s-1}(S^k)$. For this we use \cite{Toda}*{Proposition
  1.2 IV)}: $\{\alpha_{s-1},2,\alpha\}2=\alpha_{s-1}\circ \{2,\alpha,2\} =0$.
The latter follows because by our induction hypothesis $[\alpha_{s-1}\circ 2]=0$ and
$\{2,\alpha,2\}$ contains by assumption only multiples of $2$.

Finally, we define $\mu_{8j+1,k-1}$ as any element in the Toda bracket
$\{\eta_{,k-1},2,\alpha_j\}$. Here, we let $\eta_{,n}\colon S^{n+1}\to S^n$ represent (for $n\ge
3$) the generator of $\pi_{n+1}(S^n)\iso \integers/2$.

To see that $\mu_{8j+1,k-1}$ is of order $2$ we need some preparation:

 If for $a\in \pi_{k+s}(S^k)$ we have that $\{2,a,2\}=
  2\pi_{k+s+1}(S^k)\subset \pi_{k+s+1}(S^k)$, then for arbitrary $x\colon
  S^r\to S^{k+s}$ and $y\colon S^k\to S^b$ also $\{2,a,2x\}\subset
  2\pi_{r+1}(S^k)$ and $\{2y,a,2\}\subset 2\pi_{k+s+1}(S^b)$. Note that
  $\{2,a,2x\}$ is a coset of $2\pi_{r+1}(S^k)+\pi_{k+s+1}(S^r)\circ 2
  Ex\subset 2\pi_{r+1}(S^k)$ so it suffices to show that $0\in \{2,a,2x\}$,
  and similarly for $\{2y,a,2\}$. Now the module property
\cite{Toda}*{Proposition 1.2 IV} implies
  $0=0 \circ x \in \{2,a,2\}\circ x\subset \{2,a,2x\}$, and in the same way
  $0\in \{2,a,2y\}$.

 Now
we show by induction that $\{2,\alpha_s,2\}$ consists of the multiples of
$2$. By assumption this is true for $s=1$. For the induction, we apply the
Leibniz rule \cite{Toda}*{Proposition 1.5} which says that
$\{2,\alpha_s,2\}=\{2,\{\alpha_{s-1},2,\alpha\},2\}$ (which is a coset of the
multiples of $2$) is congruent to the set
\[ \{\{2,\alpha_{s-1},2\},\alpha,2\} + \{2,\alpha_{s-1},\{2,\alpha,2\}\}. \]
By the induction hypothesis and the above consideration, both these iterated Toda
brackets only contain multiples of $2$, and so $\{2,\alpha_s,2\}$ must be the
coset of $0$ of the multiples of $2$.

Finally, using again \cite{Toda}*{Proposition 1.2}
\begin{equation*}
 2\mu_{8j+1,k-1}\in  \{\eta_{,k-1},2,\alpha_j\} 2 = \eta_{,k-1}\circ
 \{2,\alpha_{j},2\} \subset \eta_{,k-1} \circ 2\pi_{k+8j}(S^k) =0
\end{equation*}
because $2\eta_{,k-1}=0$ as long as $k\ge 4$.

Finally, we follow literally one of the proofs Adams gives to show that
$\alpha(\mu_{8j+1})$ is non-trivial. This uses the fact, estabilished in
\cite{A}*{p.~68} that for the relevant dimension $\alpha$ coincides with Adams'
homomorphism $e_\complexs$ (both considered to be maps to $\reals/\integers$).
To compute $e_\complexs(\mu_{8j+1})$ one can inductively apply 
\cite{A}*{Theorem 11.1}. This theorem states that
$e_\complexs\{x,2,y\}=2e_\complexs(x)e_\complexs(y)$ modulo $\integers$. Finally,
one only has to use that $e_\complexs(\eta)=1/2$ and
$e_\complexs(\alpha)=1/2$, which is established in \cite{A}*{proof of Theorem 12.13}.

For the choice of $\alpha_1$ we follow again \cite{A}*{proof of Theorem 12.13}
which uses corresponding results of Toda.
Indeed, in
\cite{Toda}*{Lemma 5.13} Toda checks that the element $\sigma'''\in
\pi_{5+7}(S^5)$ of
order $2$ stabilizes to the element of order $2$ in $\pi_7^S$. Moreover, with $E$
still denoting
the suspension, Toda shows in \cite{Toda}*{Corollary 3.7} that 
$\{2,E\sigma''',2\} \ni E\sigma'''\eta_{,13} = 2\sigma''\eta_{,13}=0$
since $\eta_{,13}$ has order $2$. Therefore, an appropriate choice is
$\alpha_1:=E(\sigma''')=2\sigma''\in \pi_{6+7}(S^6)$.  Here
$\sigma''$ is Toda's notation for an element of order $4$ in $(\pi_{6+7}(S^6)
\cong \integers/60\integers$.
\end{proof}

\begin{remark}
  On the face of it, our construction of $\alpha_s$ and therefore $\mu_{8j+1}$
  is slightly more general than Adams' construction which does seem not allow for
  arbitrary elements in the Toda brackets involved in the inductive
  construction. Note, however that we have to use unstable Toda brackets,
  which means that the same construction, starting with larger $k$, might give
  rise to more elements in $\pi_{8j+1}^s$ which do not live on the
  $5$-sphere. 
\end{remark}

\begin{remark} \label{rem:Curtis}
Another proof of the existence of $\mu_{8j+1, 5}$ comes from \cite{Cu}
where Curtis
calculated the sphere of origin for many examples using the the Adams spectral
sequence
and the restricted lower central series spectral sequence.  In fact Curtis
shows that elements of non-trivial $d$-invariant
live on $S^3$.  We gave an independent proof to avoid the task
of checking how the notations from \cite{Cu} match with those of \cite{A}
and to show that there is a $\mu_{8j+1,5}$ of order two.
\end{remark}
%

\subsection{Proof of Theorem \ref{thm:1}} \label{subsec:thm1}

In this subsection we prove our main theorem.  Since every homotopy sphere
has a unique spin-structure we obtain the $\alpha$-invariant on $\Gamma^{n+1}
\cong \Theta_{n+1}$: 
\[ \alpha \colon \Gamma^{n+1} \to \Omega_{n+1}^{\Spin} \to KO_{n+1} \,. \]
Combining \cite{M}*{Theorem 2 and its proof}, \cite{A}*{Theorems 7.18 and
  12.13} and \ref{thm:KM} we see that for each $j>1$ there is a
homotopy $8j-7$-sphere $\Sigma_{\mu_{8j-7}}\in\Theta_{8j-7}$ representing
$[\mu_{8j-7}]\in \coker(J_{8j-7})$.  In particular we have the equation 
$\alpha(\Sigma_{\mu_{8j-7}}) = \alpha(\eta)\beta^{j-1} \ne 0 \in KO_{8j-7}$.
By Cerf's theorem \cite{Ce}, $\Gamma^9_2 = \Gamma^9_1$ and so we can find $g \in \pi_1(\Diff(D^7, \del))$ 
such that $\Sigma(\lambda(g)) = \Sigma_{\mu_9}$. By \eqref{eq:alpha_mult} above, 
\begin{equation}\label{eq:comput_alpha}
\alpha ( \Sigma^{}_{\mu_9} \times \Sigma^{}_{\mu_{8j-7}}) = \alpha(\eta^2)
\beta^j \ne 
0 \in KO_{8j+2} \, .
\end{equation}

Recall the homotopy equivalence $M \colon \Diff(D^7, \del) \simeq \Omega^8(PL_7/O_7)$ of Theorem \ref{thm:Morlet} and consider the induced isomorphism 
%
\[ M_{7*} \colon \pi_1(\Diff(D^7, \del)) \cong \pi_9(PL_7/O_7) \, . \]
With $g \in \pi_1(\Diff(D^7, \del))$ as above we have $M_{7*}(g) \in \pi_9(PL_7/O_7)$.
Now let $\mu_{8j-7, 9} \in \pi_{8j+2}(S^9)$ be an element of order $2$ with $S(\mu_{8j-7, 9}) = \mu_{8j-7}\in\pi_{8j-7}^S$ whose existence is proven in Lemma \ref{lem:Curtis}.  The composition 
\[ M_{7*}(g) \circ \mu_{8j-1, 9} \in \pi_{8j+2}(PL_7/O_7) \]
%
has order $2$ and we define
\[ f_j := M_{7*}^{-1}(M_{7*}(g) \circ \mu_{8j-7, 9}) \in \pi_{8j-6}(\Diff(D^7, \del)) \]
so that $\lambda(f_j) \in \Gamma^{8j+2}_{8j-5}$.  For $\Sigma_{f_j} : = \Sigma(\lambda(f_j))$ we show below that 
\begin{equation} \label{eq:calc_alpha}
 \alpha(\Sigma^{}_{f_j}) = \alpha ( \Sigma^{}_{\mu_9} \times \Sigma^{}_{\mu_{8j-7}}) 
\end{equation}
and so by \eqref{eq:comput_alpha} we have that $\alpha(\lambda(f_j)) = \alpha(\Sigma^{}_{f_j})  = \alpha(\eta^2) \beta^j \neq 0 \in KO_{8j+2}$ which proves Theorem \ref{thm:1}.   

We prove equation \eqref{eq:calc_alpha} using the following diagram where $k=8j+2$.  We obtain the diagram by combining \cite{B-L}*{p.\,14} and \cite{Lueck}*{Theorems 6.47, 6.48} and we claim that it commutes:

\begin{equation}\label{eq:big_diagram}
  \begin{CD}
    \pi_1(\Diff(D^7,\partial))\times \pi_k(S^9)  && \pi_{k-8}(\Diff(D^7,\partial)) @>{\Sigma\circ\lambda}>> \Theta_k\\
      @VV{M_*\times \id}V @V{\iso}V{M_*}V  @VV{=}V\\
    \pi_9(PL_7/O_7)\times \pi_{k}(S^9) @>{\circ}>>
    \pi_{k}(PL_7/O_7) @>{\Psi^{-1}\circ S_*}>> \Theta_k\\
    @VV{S_* \times \id}V  @VV{S}V @VV{=}V\\
    \pi_9(PL/O)\times \pi_{k}(S^9) @>{\circ}>> \pi_{k}(PL/O) @<{\Psi}<{\iso}< \Theta_{k}\\
    @VV{p_*\times S}V @VV{p_*}V @VVV\\
    \pi_9(G/O)\times \pi_{k-9}^S @>{\circ}>> \pi_{k}(G/O) @<{\iso}<<
    \Omega^{\rm alm}_k @>{\alpha}>> KO_k\\ 
    @AA{q_*\times \id}A @AA{q_*}A @AAA @AA=A\\
    \pi_9(G)\times \pi_{k-9}^S @>{\circ}>> \pi_{k}(G) @<{\iso}<< \Omega^{\rm
      fr}_k @>{\alpha}>> KO_k\\   
    @AA{\iso}A @AA{\iso}A @AA{=}A @AA{=}A\\
    \pi_9^S\times \pi_{k-9}^S @>{\circ}>> \pi_{k}^S @>{\iso}>> \Omega^{\rm
        fr}_k @>{\alpha}>> KO_{k}
  \end{CD}
\end{equation}

Using the claimed commutativity of diagram \eqref{eq:big_diagram} 
let us start in the second row with the pair
\[ (M_{7*}(g), \mu_{8j-7, 9} ) \in \pi_9(PL_7/O_7)\times \pi_{8j+2}(S^9). \]
Since $\Sigma(\lambda(g)) = \Sigma_{\mu_9}$, the pair $(\mu_9,\mu_{8j-7}) \in \pi_9^S \times \pi_{k-9}^S$
maps to the same element in $\pi_9(G/O)\times \pi_{k-9}^S$
as $(M_{7*}(g), \mu_{8j-7, 9} )$.
We already checked in Equation
\eqref{eq:comput_alpha} that $(\mu_9,\mu_{8j-7}) $ is mapped in the bottom row to $\alpha(\eta^2)\beta^j\in KO_{8j+2}$.
Finally, $\Sigma_{f_j}$ is obtained from the element $\Sigma \circ \lambda
\circ M_{7*}^{-1}(M_{7*}(g) \circ \mu_{8j-7, 9}) \in \Theta_{8k+2}$ in the top right corner of the
diagram. By commutativity, its $\alpha$-invariant is as desired.

Now we prove the commutativity of \eqref{eq:big_diagram}. The left part is
taken from \cite{B-L}, the identification of the
homotopy groups of $PL/O$, $G/O$, $G$ with the bordism groups or $\Theta_k$
and the corresponding commutativity
from \cite{Lueck}*{Section 6}. The only assertions which are not contained in
those two
references are the compatibility with $\alpha$, which is clear, and, although
implicitly stated in \cite{B-L}, the commutativity of 
the diagram
\begin{equation*}
  \begin{CD}
\pi_{k-8}(\Diff(D^7,\partial)) @>\Sigma \circ \lambda >> \Theta_k\\
     @V{M_*}V{\iso}V  @VV{=}V\\
        \pi_{k}(PL_7/O_7) @>{\Psi^{-1}\circ S_*}>> \Theta_k    ,
  \end{CD}
\end{equation*}
This commutativity we have essentially prove in Lemma \ref{lem:M=SigmaPsi},
one has additionally only to apply compatibility of the constructions with suspension.

\begin{remark}
The argument above started from the statement $\Sigma^{-1}(\Sigma^{}_{\mu_9})
\in \Gamma^9_2$. If
one knew that a 9-dimensional Hitchin sphere $\Sigma{\mu_9}$ had Gromoll
filtration $\Gamma^9_k$ for $2 < k \leq 5$ then we could repeat the argument
to conclude that $\alpha(\Gamma^{8j+2}_{8j-7+k}) \neq 0$.  As of writing, it
seems that nothing is known about the Gromoll filtration of $9$-dimension
Hitchin spheres beyond the Cerf-Hatcher bounds
$\Sigma^{-1}(\Sigma^{}_{\mu_9})\in \Gamma^9_2$ and $\Gamma^9_6=\{0\}$. 
\end{remark}

\begin{remark}\label{rem:Toda}
  In our construction, we crucially use the ring structure of $KO_*$ and the
  non-triviality of the product of generators in $KO_{8k+1}$. This means that
  the interesting elements (with non-trivial $\alpha$-invariant) we obtain are
  in $\pi_k(\Diff(D^n,\boundary))$ with $k+n\equiv 1\pmod 8$. 

 We expect that one can use Toda brackets (of an element in
 $\pi_*(PL_k/O_k)$ with elements of $\pi_*(S^n)$) to construct such elements
 in $\pi_k(\Diff(D^n,\boundary))$ with $k+n \not\equiv 1 \pmod 8$. This we leave for
 future work.
\end{remark}

\subsection{Positive scalar curvature metrics: Corollary \ref{cor:1} } \label{subsec:cor1}
To prove Corollary \ref{cor:1} one need only recall the arguments following \cite[Proposition 4.6]{H}:
Let $X$ be a closed  $m$-dimensional spin-manifold ($m\ge 7$) and let
$\Pos(X)$ be the space of positive  scalar curvature metrics on $X$ which we assume to be non-empty.  
Observe that the group of diffeomorphisms
of $X$, $\Diff(X)$, acts on $\Pos(X)$ by composition.  In particular, fixing a
metric $g \in\Pos(X)$, define the map 
\[ T \colon\Diff(X) \to \Pos(X), \quad h \mapsto h^*g \, .   \]
Moreover, by fixing a $k$-disc $D^m \subset X$ and extending diffeomorphisms by the identity we obtain a map $i \colon\Diff(D^m, \del) \to \Diff(X)$.

In \cite[Proposition 4.6]{H} Hitchin defines a homomorphism
\[ A_{n-1} \colon\pi_{n-1}(\Pos(X)) \to KO_{m+n} \, . \]
He shows then that the composite homomorphism
\[ B_{n-1} := A_{n-1} \circ T_* \colon\pi_{n-1}(\Diff(X)) \to \pi_{n-1}(\Pos(X), g_0) \to KO_{m+n} \]
assigns to $\phi\colon S^{n-1}\to\Diff(X)$ the family index of the bundle of
spin manifolds $X\to Z_\phi\to S^n$ obtained by the usual clutching
construction. Moreover, in \cite{H}*{Section 4.3, in particular Proposition
  4.4} Hitchin shows that if we start with $\phi\colon S^{n-1}\to
\Diff(D^m,\partial)$ then $B(i_*(\phi))=\alpha(\Sigma_\phi)$, where
$\Sigma_\phi$ is the exotic $(n+m)$-sphere defined by $\lambda(\phi) \in \Gamma^{n+m}_n$.

Fix $j$ with $8j+1 > m \geq 7$.  We apply the argument above starting from $f_j$ as in Theorem \ref{thm:1} and
$\phi := \lambda^{8j+1}_{m-7, 8j-6}(f_j) \in
\pi_{8j+1-m}(\Diff(D^m,\partial))$.  By Theorem \ref{thm:1} we have that $2
\phi = 0$ and that $\lambda(\phi) \in \Gamma^{8j+2}_{8j-5}$ satisfies
$\alpha(\lambda(\phi)) \neq 0$.  Pulling back the metric $g$ by $\phi$ we
obtain a continuous family of metrics in $\Pos(X)$ parameterized by
$S^{8j+1-m}$ and hence the homotopy class $T_*i_*(\phi) \in
\pi_{8j+1-m}(\Pos(X))$ of order $2$. By \cite{H}*{Proposition 4.4},
$A_{8j+1-m}(T_*i_*(\phi)) = \alpha(\lambda)$ and so generates $KO_{8j+2} \cong
\Z/2$.  This proves Corollary \ref{cor:1}.



\begin{appendix}
  \section{The Gromoll filtration: table of values} \label{sec:Gromoll}

  We think that our results about the Gromoll filtration and the existence
  of elements rather deep down with non-trivial $\alpha$-invariant are
  interesting in their own right.  In this appendix we place them in context by
  assembling some results from the literature about the Gromoll filtration.
  
\medskip
  \begin{tabular}{|l|p{7.5cm}|}
 \hline
  $\Gamma_2^7 \cong \Z/{28}$ & $\Gamma_2^7 \neq \Gamma_3^7 \supset 0 =
  \Gamma_4^7$. The inequality for $\Gamma_3^7 \neq \Gamma_2^7$ is due to Weiss
    \cite{W2} who proved that $\Gamma_3^7$ has at most $14$ elements.\\ \hline
  $\Gamma_2^8 \cong \Z/2$ & nothing known 
  \\ \hline
  $\Gamma_2^9 \cong (\Z/2)^3$ & 
  \\ \hline
  $\Gamma_{2}^{10} \cong \Z/6$ & $\Gamma_{3}^{10} \supset \Z/2$ \emph{by Theorem
  \ref{thm:1}}  
  \\ \hline
  $\Gamma_{2}^{11} \cong \Z/{992}$ & $\Gamma^{11}_3 \subset \Z/{496}$ by \cite{W1}
  \\ \hline
  $\Gamma_{2}^{12} = 0$ &\\ \hline
  $\Gamma_{2}^{13} \cong \Z/3$ & $\Gamma_{2}^{13} = \Gamma_{3}^{13} =
  \Gamma_{4}^{13}$ by \cite{A-B-K} 
  \\ \hline
  $\Gamma_{2}^{14} \cong \Z/2$ & nothing known\\ \hline
  $\Gamma_{2}^{15} \cong \Z/2 \oplus \Z/{8,\!128}$ & $\Gamma_{3}^{15} \cong \Z/2
  \oplus \Z/{4,\!064}$ by \cites{A-B-K, W1}\\ \hline
  $\Gamma_{2}^{16} \cong \Z/2$ &nothing known,  conjecturally $\Gamma_{3}^{16}
  = 0$\\ \hline
   $\Gamma_{2}^{17} \cong (\Z/2)^2$ & If Remark \ref{rem:Toda} could be
   implemented we would be able to conclude that  $\alpha(\Gamma^{17}_{9})\ne
   0$ or perhaps even $\alpha(\Gamma^{17}_{10})\ne 0$, in particular
   $\Gamma^{17}_{9}$ or even $\Gamma^{17}_{10}$ would contain $\integers/2$.\\
   \hline 
   $\Gamma_{2}^{18} \cong \Z/8 \oplus \Z/2$ & \emph{By Theorem \ref{thm:1}},
   $\alpha(\Gamma^{18}_{11})\ne 0$. Because $\integers/8=\ker(\alpha)$,
   $\Gamma^{18}_{11} \supset \Z/2$.\\ \hline
 \end{tabular}

\end{appendix}

\begin{bibdiv}
\begin{biblist}

\bib{A}{article}{
   author={Adams, J. F.},
   title={On the groups $J(X)$. IV},
   journal={Topology},
   volume={5},
   date={1966},
   pages={21--71},
   issn={0040-9383},
   review={\MR{0198470 (33 \#6628)}},
}
	

\bib{A-B-K}{article}{
   author={Antonelli, P.},
   author={Burghelea, D.},
   author={Kahn, P. J.},
   title={Gromoll groups, ${\rm Diff}S\,^{n}$ and bilinear constructions
   of exotic spheres},
   journal={Bull. Amer. Math. Soc.},
   volume={76},
   date={1970},
   pages={772--777},
   issn={0002-9904},
   review={\MR{0283809 (44 \#1039)}},
}

 \bib{Botvinnik-Gilkey}{article}{
    author={Botvinnik, Boris},
    author={Gilkey, Peter B.},
    title={The eta invariant and metrics of positive scalar curvature},
    journal={Math. Ann.},
    volume={302},
    date={1995},
    number={3},
    pages={507--517},
    issn={0025-5831},
    review={\MR{1339924 (96f:58159)}},
    doi={10.1007/BF01444505},
 }

	
	
\bib{Burghelea_announce}{article}{
   author={Burghelea, Dan},
   title={On the homotopy type of ${\rm diff}(M^{n})$ and connected
   problems},
   language={English, with French summary},
   note={Colloque International sur l'Analyse et la Topologie
   Diff\'erentielle (Colloq. Internat. CNRS, No. 210, Strasbourg, 1972)},
   journal={Ann. Inst. Fourier (Grenoble)},
   volume={23},
   date={1973},
   number={2},
   pages={3--17},
   issn={0373-0956},
   review={\MR{0380840 (52 \#1737)}},
}

 \bib{B-L}{article}{
   author={Burghelea, Dan},
   author={Lashof, Richard},
   title={The homotopy type of the space of diffeomorphisms. I, II},
   journal={Trans. Amer. Math. Soc.},
   volume={196},
   date={1974},
   pages={1--36; ibid. 196\ (1974), 37--50},
   issn={0002-9947},
   review={\MR{0356103 (50 \#8574)}},
}

\bib{Ce}{article}{
   author={Cerf, Jean},
   title={La stratification naturelle des espaces de fonctions
   diff\'erentiables r\'eelles et le th\'eor\`eme de la pseudo-isotopie},
   language={French},
   journal={Inst. Hautes \'Etudes Sci. Publ. Math.},
   number={39},
   date={1970},
   pages={5--173},
   issn={0073-8301},
   review={\MR{0292089 (45 \#1176)}},
}


\bib{Cu}{article}{
   author={Curtis, Edward B.},
   title={Some nonzero homotopy groups of spheres},
   journal={Bull. Amer. Math. Soc.},
   volume={75},
   date={1969},
   pages={541--544},
   issn={0002-9904},
   review={\MR{0245007 (39 \#6320)}},
}

  \bib{G}{article}{
   author={Gromoll, Detlef},
   title={Differenzierbare Strukturen und Metriken positiver Kr\"ummung auf
   Sph\"aren},
   language={German},
   journal={Math. Ann.},
   volume={164},
   date={1966},
   pages={353--371},
   issn={0025-5831},
   review={\MR{0196754 (33 \#4940)}},
}
\bib{HSS}{unpublished}{
  author={Hanke, Bernhard},
  author={Schick, Thomas},
  author={Steimle, Wolfgang},
  title={The space of positive scalar curvature metrics},
  note={in preparation},
}

\bib{Hatcher}{article}{
   author={Hatcher, Allen E.},
   title={A proof of the Smale conjecture, ${\rm Diff}(S^{3})\simeq {\rm
   O}(4)$},
   journal={Ann. of Math. (2)},
   volume={117},
   date={1983},
   number={3},
   pages={553--607},
   issn={0003-486X},
   review={\MR{701256 (85c:57008)}},
   doi={10.2307/2007035},
}

\bib{HM}{book}{
   author={Hirsch, Morris W.},
   author={Mazur, Barry},
   title={Smoothings of piecewise linear manifolds},
   note={Annals of Mathematics Studies, No. 80},
   publisher={Princeton University Press},
   place={Princeton, N. J.},
   date={1974},
   pages={ix+134},
   review={\MR{0415630 (54 \#3711)}},
}

\bib{H}{article}{
   author={Hitchin, Nigel},
   title={Harmonic spinors},
   journal={Advances in Math.},
   volume={14},
   date={1974},
   pages={1--55},
   issn={0001-8708},
   review={\MR{0358873 (50 \#11332)}},
}

\bib{KM}{article}{
   author={Kervaire, Michel A.},
   author={Milnor, John W.},
   title={Groups of homotopy spheres. I},
   journal={Ann. of Math. (2)},
   volume={77},
   date={1963},
   pages={504--537},
   issn={0003-486X},
   review={\MR{0148075 (26 \#5584)}},
}

\bib{KS}{book}{
   author={Kirby, Robion C.},
   author={Siebenmann, Laurence C.},
   title={Foundational essays on topological manifolds, smoothings, and
   triangulations},
   note={With notes by John Milnor and Michael Atiyah;
   Annals of Mathematics Studies, No. 88},
   publisher={Princeton University Press},
   place={Princeton, N.J.},
   date={1977},
   pages={vii+355},
   review={\MR{0645390 (58 \#31082)}},
}


\bib{La}{article}{
   author={Lance, Timothy},
   title={Differentiable structures on manifolds},
   conference={
      title={Surveys on surgery theory, Vol. 1},
   },
   book={
      series={Ann. of Math. Stud.},
      volume={145},
      publisher={Princeton Univ. Press},
      place={Princeton, NJ},
   },
   date={2000},
   pages={73--104},
   review={\MR{1747531 (2001d:57035)}},
}


\bib{Le}{article}{
   author={Levine, J. P.},
   title={Lectures on groups of homotopy spheres},
   conference={
      title={Algebraic and geometric topology},
      address={New Brunswick, N.J.},
      date={1983},
   },
   book={
      series={Lecture Notes in Math.},
      volume={1126},
      publisher={Springer},
      place={Berlin},
   },
   date={1985},
   pages={62--95},
   review={\MR{802786 (87i:57031)}},
   doi={10.1007/BFb0074439},
}

\bib{LR}{article}{
   author={Lashof, R.},
   author={Rothenberg, M.},
   title={Microbundles and smoothing},
   journal={Topology},
   volume={3},
   date={1965},
   pages={357--388},
   issn={0040-9383},
   review={\MR{0176480 (31 \#752)}},
}

\bib{LM}{book}{
   author={Lawson, H. Blaine, Jr.},
   author={Michelsohn, Marie-Louise},
   title={Spin geometry},
   series={Princeton Mathematical Series},
   volume={38},
   publisher={Princeton University Press},
   place={Princeton, NJ},
   date={1989},
   pages={xii+427},
   isbn={0-691-08542-0},
   review={\MR{1031992 (91g:53001)}},
}
	
\bib{Lueck}{article}{
   author={L{\"u}ck, Wolfgang},
   title={A basic introduction to surgery theory},
   conference={
      title={Topology of high-dimensional manifolds, No. 1, 2},
      address={Trieste},
      date={2001},
   },
   book={
      series={ICTP Lect. Notes},
      volume={9},
      publisher={Abdus Salam Int. Cent. Theoret. Phys., Trieste},
   },
   date={2002},
   pages={1--224},
   review={\MR{1937016 (2004a:57041)}},
}

\bib{MM}{book}{
   author={Madsen, Ib},
   author={Milgram, R. James},
   title={The classifying spaces for surgery and cobordism of manifolds},
   series={Annals of Mathematics Studies},
   volume={92},
   publisher={Princeton University Press},
   place={Princeton, N.J.},
   date={1979},
   pages={xii+279},
   isbn={0-691-08225-1},
   review={\MR{548575 (81b:57014)}},
}

	
\bib{M}{article}{
   author={Milnor, John W.},
   title={Remarks concerning spin manifolds},
   conference={
      title={Differential and Combinatorial Topology (A Symposium in Honor
      of Marston Morse)},
   },
   book={
      publisher={Princeton Univ. Press},
      place={Princeton, N.J.},
   },
   date={1965},
   pages={55--62},
   review={\MR{0180978 (31 \#5208)}},
}

\bib{N}{article}{
   author={Novikov, S. P.},
   title={Differentiable sphere bundles},
   language={Russian},
   journal={Izv. Akad. Nauk SSSR Ser. Mat.},
   volume={29},
   date={1965},
   pages={71--96},
   issn={0373-2436},
   review={\MR{0174059 (30 \#4266)}},
}

\bib{PSpsc}{article}{
   author={Piazza, Paolo},
   author={Schick, Thomas},
   title={Groups with torsion, bordism and rho invariants},
   journal={Pacific J. Math.},
   volume={232},
   date={2007},
   number={2},
   pages={355--378},
   issn={0030-8730},
   review={\MR{2366359 (2008j:58035)}},
   doi={10.2140/pjm.2007.232.355},
}


\bib{Rosenberg}{article}{
   author={Rosenberg, Jonathan},
   title={$C^{\ast} $-algebras, positive scalar curvature, and the Novikov
   conjecture},
   journal={Inst. Hautes \'Etudes Sci. Publ. Math.},
   number={58},
   date={1983},
   pages={197--212 (1984)},
   issn={0073-8301},
   review={\MR{720934 (85g:58083)}},
}
\bib{Sch}{article}{
   author={Schick, Thomas},
   title={A counterexample to the (unstable) Gromov-Lawson-Rosenberg
   conjecture},
   journal={Topology},
   volume={37},
   date={1998},
   number={6},
   pages={1165--1168},
   issn={0040-9383},
   review={\MR{1632971 (99j:53049)}},
   doi={10.1016/S0040-9383(97)00082-7},
}

\bib{Smale}{article}{
   author={Smale, Stephen},
   title={Generalized Poincar\'e's conjecture in dimensions greater than
   four},
   journal={Ann. of Math. (2)},
   volume={74},
   date={1961},
   pages={391--406},
   issn={0003-486X},
   review={\MR{0137124 (25 \#580)}},
}


\bib{Stolz}{article}{
   author={Stolz, Stephan},
   title={Simply connected manifolds of positive scalar curvature},
   journal={Ann. of Math. (2)},
   volume={136},
   date={1992},
   number={3},
   pages={511--540},
   issn={0003-486X},
   review={\MR{1189863 (93i:57033)}},
   doi={10.2307/2946598},
}
\bib{Toda}{book}{
   author={Toda, Hirosi},
   title={Composition methods in homotopy groups of spheres},
   series={Annals of Mathematics Studies, No. 49},
   publisher={Princeton University Press},
   place={Princeton, N.J.},
   date={1962},
   pages={v+193},
   review={\MR{0143217 (26 \#777)}},
}
\bib{W1}{article}{
   author={Weiss, Michael},
   title={Sph\`eres exotiques et l'espace de Whitehead},
   language={French, with English summary},
   journal={C. R. Acad. Sci. Paris S\'er. I Math.},
   volume={303},
   date={1986},
   number={17},
   pages={885--888},
   issn={0249-6291},
   review={\MR{870913 (87m:57038)}},
}	
\bib{W2}{article}{
   author={Weiss, Michael},
   title={Pinching and concordance theory},
   journal={J. Differential Geom.},
   volume={38},
   date={1993},
   number={2},
   pages={387--416},
   issn={0022-040X},
   review={\MR{1237489 (95a:53057)}},
}
		
\end{biblist}
\end{bibdiv}

\end{document}